\titleformat{\subsection}[runin]
  {\normalfont\bfseries}{\thesubsection}{1em}{}
\newcommand{\Z}{\mathbb{Z}}
\newcommand{\A}{\mathbb{A}}
\newtheorem{theorem}{Theorem}[subsection]
\newtheorem{lemma}[theorem]{Lemma}
\newtheorem{corollary}{Corollary}[theorem]
\newtheorem{proposition}[theorem]{Proposition}
\theoremstyle{definition}
\newtheorem{definition}[theorem]{Definition}
\numberwithin{equation}{subsection}
\DeclareMathOperator{\GL}{GL}
\DeclareMathOperator{\IC}{IC}
\DeclareMathOperator{\Spec}{Spec}
\DeclareMathOperator{\Map}{Map}
\DeclareMathOperator{\Bun}{Bun}
\DeclareMathOperator{\pos}{pos}
\DeclareMathOperator{\disj}{disj}
\DeclareMathOperator{\pt}{pt \!}
\begin{document}

\title{A resolution of singularities for Drinfeld's compactification by stable maps}
\author{Justin Campbell}
\maketitle

\begin{abstract}

Drinfeld's relative compactification plays a basic role in the theory of automorphic sheaves, and its singularities encode representation-theoretic information in the form of intersection cohomology. We introduce a resolution of singularities consisting of stable maps from nodal deformations of the curve into twisted flag varieties. As an application, we prove that the twisted intersection cohomology sheaf on Drinfeld's compactification is universally locally acyclic over the moduli stack of $G$-bundles at points sufficiently antidominant relative to their defect.

\end{abstract}

\section{Introduction}

\subsection{} Fix an algebraically closed field $k$. Let $G$ be a connected reductive group over $k$, and $X$ a connected smooth projective curve over $k$. We assume for simplicity that the derived group of $G$ is simply connected. Fix a Borel subgroup $B \subset G$. For any algebraic group $H$ we write $\Bun_H$ for the moduli stack of $H$-bundles on $X$. Write $\Lambda$ for the lattice of coweights, which parameterize the connected components of $\Bun_B$.

\subsection{} The purpose of this paper is to introduce a resolution of singularities for Drinfeld's relative compactification $\overline{\Bun}_B \to \Bun_G$. In the appendix to \cite{FFKM} such a resolution is constructed, when $X$ has genus zero, for the space of quasimaps, which is the fiber of Drinfeld's compactification over the trivial $G$-bundle. Their resolution parameterizes stable maps (in the sense of Kontsevich, see \cite{Kon}) into the flag variety. When the $G$-bundle is allowed to vary, we obtain a relative compactification $\overline{\Bun}\!\,^K_B \to \Bun_G$ mapping to $\overline{\Bun}_B$ over $\Bun_G$, which we call Kontsevich's compactification. The stack $\overline{\Bun}\!\,^K_B$ is remarkably well-behaved: it is smooth for $X$ of arbitrary genus, and the complement of $\Bun_B$ is a normal crossings divisor (see Propositions 2.4.1 and 4.4.1 below).

In the case $G = \GL_n$ Laumon's compactification resolves the singularities of $\overline{\Bun}_B$ (see e.g. \cite{Kuz}), but it is unclear how to generalize this construction to arbitrary reductive $G$. The approach via stable maps looks completely different: whereas Laumon's compactification consists of quasimaps into a smooth stack, Kontsevich's compactification allows the curve to vary and to acquire nodes.

\subsection{} We use Kontsevich's compactification to prove that the twisted intersection cohomology of $\overline{\Bun}_B$ is universally locally acyclic over $\Bun_G$ at points sufficiently antidominant relative to their defect: this is Theorem \ref{ulatwist} below. This is the assertion of Proposition 7.9 in Gaitsgory's work \cite{G} on the quantum geometric Langlands correspondence. A different argument was suggested in \emph{loc. cit.}, but the author was unable to carry it out. Instead, we will deduce the local acyclicity from the smoothness of the projection $\overline{\Bun}\!\,^K_B \to \Bun_G$ at points sufficiently antidominant relative to their defect. Some additional care is required in the twisted case, with the normal crossings property of the boundary playing a crucial role.

In \cite{C} the author will apply the local acyclity theorem in the untwisted case to describe the nearby cycles sheaf of a degenerating one-parameter family of Whittaker sheaves. The twisted version will be applied in S. Lysenko's forthcoming work extending \cite{G}.

\subsection*{} \emph{Acknowledgements.} I thank Dennis Gaitsgory for suggesting that I use stable maps to prove Theorem \ref{ulatwist}, and for patiently answering many questions. I also thank Sergey Lysenko for careful reading and for comments which substantially improved the paper.

\section{The Kontsevich compactification}

\subsection{} Recall that a map $f : C \to Y$ from a nodal curve to a variety $Y$ is called \emph{stable} if $C$ admits no first-order infinitesimal automorphisms along which $f$ is constant. Said differently, the automorphism group of $f$ in the moduli of curves mapping to $Y$ is finite. Concretely, this means that if $C_0 \subset C$ is a rational component on which $f$ is constant, then $C_0$ contains at least three nodes of $C$, with a similar condition on genus one components which will not be relevant.

\subsection{} In what follows, the letter $S$ will always denote an affine scheme.

\begin{definition}

The \emph{Kontsevich compactification} $\overline{\Bun}\!\,^K_B$ is defined as follows. A point $S \to \overline{\Bun}\!\,^K_B$ is a commutative square \[
\begin{tikzcd}
C \ar{d}{p} \ar{r} & \pt/B \ar{d} \\
X \times S \ar{r}{\mathscr{P}_G} & \pt/G,
\end{tikzcd} \]
where $C \to S$ is a flat family of connected nodal projective curves of arithmetic genus $g$, the map $p$ has degree $1$, and the induced map $C \to (G/B)_{\mathscr{P}_G}$ is stable over every geometric point of $S$.

\end{definition}

For a geometric point $\Spec k \to \overline{\Bun}\!\,^K_B$, the projection $p : C \to X$ restricts to an isomorphism on some component, i.e. it admits a canonical section $X \to C$. Such a section does not exist for a general family. Let $\mathring{X}$ be the intersection of the smooth locus of $C$ with $X$. Observe that the connected components of $C \setminus \mathring{X}$ have arithmetic genus zero and hence are simply connected. In particular, the dual graph of $C$ is a tree, and all irreducible components not equal to $X$ are isomorphic to $\mathbb{P}^1$.

\subsection{} The open locus in $\overline{\Bun}\!\,^K_B$ where $p$ is an isomorphism (equivalently, where $C$ is smooth over $S$) identifies with $\Bun_B$, and the projection $\overline{\mathfrak{p}}\!\,^K : \overline{\Bun}\!\,^K_B \to \Bun_G$ extends $\Bun_B \to \Bun_G$. The degree of a point in $\overline{\Bun}\!\,^K_B$ is defined to be the degree of the $B$-bundle, and we denote the component of degree $\lambda \in \Lambda$ by $\overline{\Bun}\!\,^{K,\lambda}_B$.

\begin{proposition}

The stack $\overline{\Bun}\!\,^K_B$ is Artin and locally of finite type, and for any $\lambda \in \Lambda$ the projection \[ \overline{\mathfrak{p}}\!\,^{K,\lambda} : \overline{\Bun}\!\,^{K,\lambda}_B \to \Bun_G \] is proper (but not schematic in general).

\end{proposition}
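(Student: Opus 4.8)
The plan is to realize $\overline{\Bun}\!\,^K_B$ as a relative Kontsevich moduli of stable maps over $\Bun_G$ and then import the standard representability and stable reduction theorems for such moduli. Pulling back the universal $G$-bundle along $X \times \Bun_G \to \pt/G$ and twisting, I form the relative twisted flag variety $\mathcal{Y} := (G/B)_{\mathscr{P}_G} \to X \times \Bun_G$, a smooth projective morphism whose fibers are isomorphic to $G/B$. Since $X$ is projective, the composite $\mathcal{Y} \to \Bun_G$ is projective as well. Unwinding the definition, a point $S \to \overline{\Bun}\!\,^K_B$ lying over a given $S \to \Bun_G$ is the same datum as a map $\sigma : C \to \mathcal{Y}$ from a flat family of genus $g$ nodal curves satisfying $\pi \circ \sigma = p$, where $\pi$ denotes the projection of the twisted flag variety to $X \times S$; the requirement that $p$ have degree $1$ says precisely that the $X$-degree of $\sigma$ equals $1$, and stability of $\sigma$ over geometric points is built in. Thus $\overline{\Bun}\!\,^K_B$ is identified with the stack of stable maps of genus $g$ and $X$-degree $1$ into $\mathcal{Y}$, taken relative to $\Bun_G$.

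With this identification in hand, the first assertion follows from the general theory of stable maps. I would verify it smooth-locally on $\Bun_G$, choosing an atlas $U \to \Bun_G$ with $U$ an affine scheme and reducing to the target $\mathcal{Y}_U$, an honest projective scheme over $X \times U$. A relatively ample line bundle on $G/B$ embeds $\mathcal{Y}_U$ into a projective bundle $\mathbb{P}(\mathcal{E})$ over $X \times U$, and the stack of stable maps into $\mathcal{Y}_U$ is a closed substack of the stack of stable maps into $\mathbb{P}(\mathcal{E})$; the latter is a Deligne--Mumford stack, locally of finite type, by the construction of Behrend--Manin. Relativizing over the Artin base $\Bun_G$ yields an Artin stack locally of finite type, and decomposing by $\lambda$, which fixes the curve class, cuts out pieces of finite type. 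The $X$-degree-$1$ condition is locally constant and hence merely selects a union of connected components.

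For properness of $\overline{\mathfrak{p}}\!\,^{K,\lambda}$ I would apply the valuative criterion relative to $\Bun_G$, fixing $\lambda$ throughout. Separatedness is the uniqueness of limits of stable maps, which holds because a stable map admits no nonconstant infinitesimal automorphisms. For universal closedness the essential input is the stable reduction theorem for maps: given a DVR $R$ with fraction field $K$, a $G$-bundle on $X \times \Spec R$, and a stable map over $\Spec K$ into $\mathcal{Y}|_K$, the map extends---after a finite extension of $R$---to a stable map over $\Spec R$ into $\mathcal{Y}$, since $\mathcal{Y} \to \Bun_G$ is projective. The main obstacle, and the only point where the relative geometry intervenes, is to confirm that the limiting object lies in $\overline{\Bun}\!\,^{K,\lambda}_B$ rather than merely in the ambient Kontsevich space. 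This reduces to the $X$-degree of the limit being $1$, which is automatic: the $X$-degree is a numerical invariant, constant in the flat proper family, so it remains $1$ in the special fiber. Geometrically this is corroborated by stability, since the stabilization process contracts only components on which the map is constant, and therefore never destroys the unique component on which $\sigma$ restricts to a degree-$1$ map onto $X$; as recalled above, the remaining components are rational and the dual graph stays a tree.

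Finally, the projection is not schematic because boundary points may carry nontrivial finite automorphisms of the domain curve commuting with $\sigma$. A rational bubble equipped with a symmetric map contributes such automorphisms to the fiber of $\overline{\mathfrak{p}}\!\,^{K,\lambda}$ over $\Bun_G$, so the morphism cannot be representable by algebraic spaces in general. This is precisely the Deligne--Mumford stacky behavior familiar from ordinary Kontsevich moduli, now appearing fiberwise over $\Bun_G$.
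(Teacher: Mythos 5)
Your proposal is correct and follows essentially the same route as the paper: base-change to the fibers over $\Bun_G$, identify them with the (open and closed) degree-one locus in the Kontsevich moduli of stable maps into the twisted flag variety $(G/B)_{\mathscr{P}_G}$, and invoke the standard representability and properness results for such moduli (the paper cites Abramovich--Oort where you cite Behrend--Manin and spell out the valuative criterion). The extra details you supply --- the constancy of the $X$-degree in the limit and the source of the non-schematic behavior --- are consistent with, and slightly more explicit than, what the paper records.
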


\begin{proof}

For any point $S \to \Bun_G$ the fiber product $\overline{\Bun}\!\,^K_B \times_{\Bun_G} S$ is the open (and closed) locus in the stack $\overline{\mathscr{M}}_g((G/B)_{\mathscr{P}_G})$ of stable maps defined on arithmetic genus $g$ curves where the composition with $(G/B)_{\mathscr{P}_G} \to X \times S$ has degree one. It is well-known that $\overline{\mathscr{M}}_g((G/B)_{\mathscr{P}_G})$ is representable by a Deligne-Mumford stack locally of finite type (see \cite{AO}), from which it follows that $\overline{\Bun}\!\,^K_B$ is an Artin stack locally of finite type. Moreover, in \emph{loc. cit.} it is shown that $\overline{\mathscr{M}}_g((G/B)_{\mathscr{P}_G}) \to S$ is proper when the degree is fixed. 

\end{proof}

\subsection{} Denote by $\mathscr{M}_X$ the moduli stack of connected nodal curves $C$ equipped with a proper degree one map $p : C \to X$ satisfying $R^1p_*\mathscr{O}_C = 0$. Note that in this definition $X$ need not be proper. If $X$ is proper, then $\mathscr{M}_X$ can also be described as the moduli stack of proper connected nodal curves of arithmetic genus $g$ equipped with a degree one map to $X$.

\begin{proposition}

Both $\mathscr{M}_X$ and the projection $\pi : \overline{\Bun}\!\,^K_B \to \mathscr{M}_X$ are smooth, so $\overline{\Bun}\!\,^K_B$ is smooth.

\label{kontsm}

\end{proposition}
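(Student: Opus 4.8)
The plan is to factor the smoothness of $\overline{\Bun}\!\,^K_B$ through the projection $\pi$: once $\mathscr{M}_X$ is smooth over $k$ and $\pi$ is smooth, the composite $\overline{\Bun}\!\,^K_B \to \Spec k$ is smooth. Because the stability condition cuts out an open substack, it plays no role in either assertion, so I may compute tangent complexes on the ambient stacks of prestable curves and maps and restrict to the open stable locus at the end.

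For $\mathscr{M}_X$: a geometric point is an $X$-scheme $p : C \to X$, and its deformations inside $\mathscr{M}_X$ are the deformations of $C$ as an $X$-scheme, governed by $\mathbb{L}_{C/X}$; the tangent space is $\Ext^1_C(\mathbb{L}_{C/X}, \mathscr{O}_C)$ and the obstructions lie in $\Ext^2_C(\mathbb{L}_{C/X}, \mathscr{O}_C)$. I would show this $\Ext^2$ vanishes. Since $C$ is nodal and $X$ is smooth, $p$ is a local complete intersection morphism, so $\mathbb{L}_{C/X}$ is perfect of Tor-amplitude $[-1,0]$ and $R\mathscr{H}om_C(\mathbb{L}_{C/X},\mathscr{O}_C)$ has amplitude $[0,1]$; in particular $\mathscr{E}xt^{\geq 2}(\mathbb{L}_{C/X}, \mathscr{O}_C) = 0$. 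In the local-to-global spectral sequence the only contributions to $\Ext^2$ are then $H^2(C, \mathscr{H}om(\Omega_{C/X}, \mathscr{O}_C))$, which vanishes because $C$ is one-dimensional, and $H^1(C, \mathscr{E}xt^1(\mathbb{L}_{C/X}, \mathscr{O}_C))$. The main point is to control this last group. A local computation at the nodes identifies $h^{-1}(\mathbb{L}_{C/X})$ with $p^*\Omega_X$ restricted to the components contracted by $p$, so $\mathscr{E}xt^1(\mathbb{L}_{C/X}, \mathscr{O}_C)$ is a line bundle supported on $C \setminus \mathring{X}$ whose restriction to each component is $p^* T_X$, hence trivial since $p$ is constant there; as the components of $C \setminus \mathring{X}$ are simply connected of arithmetic genus zero, $H^1$ vanishes. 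Thus $\Ext^2_C(\mathbb{L}_{C/X}, \mathscr{O}_C) = 0$ and $\mathscr{M}_X$ is smooth, its algebraicity and local finite type being inherited from the stack of prestable curves equipped with a map to $X$.

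For $\pi$: over a fixed $C/X$ the fiber classifies a $G$-bundle $\mathscr{P}_G$ on $X$ together with a $B$-reduction $\mathscr{P}_B$ of $p^*\mathscr{P}_G$, so it is the fiber product of the stack of $B$-bundles on $C$ over the stack of $G$-bundles on $C$ with the pullback $p^*$ from the stack of $G$-bundles on $X$. Taking relative tangent complexes, the fiber of $\pi$ has tangent complex $\operatorname{fib}\big(R\Gamma(X,\mathfrak{g}_{\mathscr{P}_G})[1] \oplus R\Gamma(C,\mathfrak{b}_{\mathscr{P}_B})[1] \to R\Gamma(C,\mathfrak{g}_{\mathscr{P}_B})[1]\big)$, the map being $p^*$ on the first summand and the inclusion $\mathfrak{b} \hookrightarrow \mathfrak{g}$ on the second. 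Now $Rp_*\mathscr{O}_C = \mathscr{O}_X$ (degree one with connected fibers, together with $R^1p_*\mathscr{O}_C = 0$), so the projection formula makes $p^* : R\Gamma(X,\mathfrak{g}_{\mathscr{P}_G}) \to R\Gamma(C, p^*\mathfrak{g}_{\mathscr{P}_G})$ an equivalence. Consequently the fiber sequence collapses and the relative tangent complex of $\pi$ is $R\Gamma(C,\mathfrak{b}_{\mathscr{P}_B})[1]$. Its obstruction group is $H^2(C,\mathfrak{b}_{\mathscr{P}_B}) = 0$ because $C$ is a curve, so $\pi$ is formally smooth, hence smooth.

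The main obstacle is the vanishing of $\Ext^2_C(\mathbb{L}_{C/X}, \mathscr{O}_C)$ for $\mathscr{M}_X$: unlike the case of $\pi$, it does not reduce to the one-dimensionality of $C$ alone, since $\mathscr{E}xt^1(\mathbb{L}_{C/X}, \mathscr{O}_C)$ is genuinely supported in positive dimension along the contracted rational components. The crux is to identify this sheaf with $p^* T_X$ and to invoke the genus-zero, simply-connected geometry of $C \setminus \mathring{X}$ recorded above to force $H^1$ to vanish.
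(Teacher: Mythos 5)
Your proof is correct and follows the same overall skeleton as the paper's (smoothness of $\mathscr{M}_X$ via vanishing of the degree-one part of its tangent complex, then smoothness of $\pi$ via the fiber product description $\Bun_G(X) \times_{\Bun_G(C)} \Bun_B(C)$ with the stability condition discarded as an open condition), but both vanishing arguments are carried out by a genuinely different route. The paper handles both halves by Serre duality on the nodal curve $C$: the obstruction groups are dualized into spaces of global sections of $\omega_C \otimes p^*T^*(X)$ (resp.\ $\omega_C \otimes p^*\mathfrak{g}^*_{\mathscr{P}_G}$), which are killed on every contracted component because the restriction there is $\omega_{C_0}$ (resp.\ $\mathfrak{g}^*\otimes\omega_{C_0}$) on $\mathbb{P}^1$. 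You instead use the local-to-global spectral sequence for $\Ext^2(\mathbb{L}_{C/X},\mathscr{O}_C)$ together with the genus-zero, tree geometry of $C\setminus\mathring{X}$, and, for $\pi$, the identity $Rp_*\mathscr{O}_C=\mathscr{O}_X$ plus the projection formula. The latter is arguably cleaner than the paper's: it yields the isomorphism $R\Gamma(X,\mathfrak{g}_{\mathscr{P}_G})\simeq R\Gamma(C,p^*\mathfrak{g}_{\mathscr{P}_G})$ outright rather than only the surjectivity on $H^1$, and it makes transparent why the condition $R^1p_*\mathscr{O}_C=0$ is built into the definition of $\mathscr{M}_X$. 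One small imprecision in your $\mathscr{M}_X$ argument: $\mathscr{E}xt^1(\mathbb{L}_{C/X},\mathscr{O}_C)$ is not literally a line bundle restricting to $p^*T_X$ on the contracted components --- it also receives a torsion contribution from $\mathscr{E}xt^1(\Omega_{C/X},\mathscr{O}_C)$ supported at the nodes, and the locally free part is twisted at the attaching points (in the favorable, nonnegative direction). Since skyscrapers and nonnegative twists of the trivial bundle on a tree of $\mathbb{P}^1$'s have no $H^1$, this does not affect the conclusion, but the identification should be stated as ``an extension of a sheaf with zero-dimensional support by a nonnegatively twisted quotient of $p^*T_X$ on the genus-zero trees'' rather than as an equality.
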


\begin{proof}

First we prove that $\mathscr{M}_X$ is smooth. Observe that \[ T_p(\mathscr{M}_X) = R\Gamma(C,T(C/X))[1], \] so we must prove that $R^i\Gamma(C,T(C/X)) = 0$ for $i > 1$. A simple local calculation shows that $T(C)$ is concentrated in degrees $0$ and $1$, and that $H^1(T(C))$ is the sum of the delta sheaves at the nodes. It follows that $R^i\Gamma(C,T(C/X)) = 0$ for $i > 2$, so it remains to check that \[ R^1\Gamma(C,T(C)) \longrightarrow R^1\Gamma(C,p^*T(X)) \] is surjective.

Applying Serre duality, we must show that \[ R^0\Gamma(C,\omega_C \otimes p^*T^*(X)) \longrightarrow R^0\Gamma(C,\omega_C \otimes T^*(C)) \] is injective, where here $\omega_C = \det T^*(C)$ is the dualizing line bundle. We claim that the composition \[ R^0\Gamma(C,\omega_C \otimes p^*T^*(X)) \longrightarrow R^0\Gamma(C,\omega_C \otimes T^*(C)) \longrightarrow R^0(\mathring{X},T^*(\mathring{X})^{\otimes 2}) \] is injective, where the second map is restriction to $\mathring{X}$, the intersection of $X$ with the smooth locus of $C$. Observe that for any irreducible component $C_0$ other than $X$ we have \[ (\omega_C \otimes p^*T^*(X))|_{C_0} \cong \omega_{C_0}. \] Since $\omega_{C_0}$ has no global sections, any global section of the line bundle $\omega_C \otimes p^*T^*(X)$ is supported on $X$, which implies the desired injectivity.

To prove smoothness of $\pi$, fix a geometric point $\xi : \Spec k \to \overline{\Bun}\!\,^K_B$ over $p : C \to X$. The fiber $\pi^{-1}(p)$ maps to $\Map(C,\pt/B)$, which is smooth because $C$ is a curve. The square \[
\begin{tikzcd}
\pi^{-1}(p) \ar{d} \ar{r} & \Bun_G \ar{d} \\
\Map(C,\pt/B) \ar{r} & \Map(C,\pt/G),
\end{tikzcd} \]
is not Cartesian, but $\pi^{-1}(p)$ is the open locus in the fiber product defined by the stability condition. It therefore suffices to prove that the map $p^* : \Bun_G \to \Map(C,\pt/G)$ is smooth at $\mathscr{P}_G$. Since both domain and target are smooth stacks, we need only show that \[ R^1\Gamma(X,\mathfrak{g}_{\mathscr{P}_G}) \longrightarrow R^1\Gamma(C,p^*(\mathfrak{g}_{\mathscr{P}_G}))
\] is surjective.

This is equivalent to the claim that
\begin{equation}
R^0\Gamma(C,\omega_C \otimes p^*(\mathfrak{g}^*_{\mathscr{P}_G})) \longrightarrow R^0\Gamma(X,\omega_X \otimes \mathfrak{g}^*_{\mathscr{P}_G})
\label{injclaim}
\end{equation}
is injective. Recall that $C$ is Gorenstein, i.e. $\omega_C$ is a line bundle. A global section of $\omega_C \otimes p^*(\mathfrak{g}^*_{\mathscr{P}_G})$ vanishes on any irreducible component $C_0 \neq X$, since the restriction of the vector bundle there is isomorphic to $\mathfrak{g}^* \otimes \omega_{C_0}$ and $C_0 \cong \mathbb{P}^1$. Thus both sides of (\ref{injclaim}) inject into $R^0\Gamma(\mathring{X},\omega_X \otimes \mathfrak{g}^*_{\mathscr{P}_G})$, which finishes the proof.

\end{proof}

\section{The morphism to Drinfeld's compactification}

\subsection{} We recall the definition of Drinfeld's compactification $\overline{\Bun}_B$. Let $\check{\Lambda}$ be the weights of $G$ and denote the dominant weights by $\check{\Lambda}^+$. A map $S \to \overline{\Bun}_B$ is the following data: a $G$-bundle $\mathscr{P}_G$ and a $T$-bundle $\mathscr{P}_T$ on $X \times S$, and for every $\check{\lambda} \in \check{\Lambda}^+$ a monomorphism of coherent sheaves \[ \kappa^{\check{\lambda}} : \check{\lambda}(\mathscr{P}_T) \longrightarrow V^{\check{\lambda}}_{\mathscr{P}_G}, \] satisfying Pl\"{u}cker-type relations (see \cite{BG} for details).

Constructing the morphism $\overline{\Bun}\!\,^K \to \overline{\Bun}_B$ requires some preliminary observations.

\begin{lemma}

The stack $\Bun_B$ is dense in $\overline{\Bun}\!\,^K_B$. In particular, if $S \to \overline{\Bun}\!\,^K_B$ is smooth then the resulting map $p : C \to X \times S$ is an isomorphism over a locus of codimension at least $2$ in $X \times S$.

\label{dense}

\end{lemma}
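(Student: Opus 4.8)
The plan is to prove the density statement first and then extract the codimension estimate as a consequence. For density I would exploit the smooth projection $\pi : \overline{\Bun}\!\,^K_B \to \mathscr{M}_X$ of Proposition \ref{kontsm}. Let $\mathscr{M}_X^\circ \subseteq \mathscr{M}_X$ be the open substack parameterizing those $p : C \to X$ which are isomorphisms (equivalently, those $C$ which are smooth). As observed in the discussion of the projection $\overline{\mathfrak{p}}\!\,^K$, the smooth-curve locus of $\overline{\Bun}\!\,^K_B$ is exactly $\Bun_B$, so $\Bun_B = \pi^{-1}(\mathscr{M}_X^\circ)$. Since $\pi$ is smooth it is flat and open, and a flat morphism satisfies going-down; hence the preimage of any dense open substack is again dense. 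Indeed, given a point $y$ of $\overline{\Bun}\!\,^K_B$ over $x \in \mathscr{M}_X$, going-down lifts a generization $x' \rightsquigarrow x$ with $x'$ the generic point of a component of $\mathscr{M}_X$ through $x$ to a generization $y' \rightsquigarrow y$; once $\mathscr{M}_X^\circ$ is known to be dense we have $x' \in \mathscr{M}_X^\circ$, so $y' \in \Bun_B$ and $y \in \overline{\Bun_B}$. Thus density of $\Bun_B$ reduces to density of $\mathscr{M}_X^\circ$ in $\mathscr{M}_X$.

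To prove that $\mathscr{M}_X^\circ$ is dense I would smooth the nodes. A geometric point of $\mathscr{M}_X$ is a nodal curve obtained from $X$ by attaching a tree of $\mathbb{P}^1$'s, so every node is separating and the total arithmetic genus is $g$. Because $\mathscr{M}_X$ is smooth, its deformation theory is unobstructed, and the standard local model $xy = t$ supplies a smoothing parameter for each node; smoothing all the nodes simultaneously produces a flat family over a one-parameter base whose general member is a smooth projective curve of genus $g$ equipped with a degree-one map to $X$, necessarily an isomorphism. Hence every point of $\mathscr{M}_X$ lies in the closure of $\mathscr{M}_X^\circ$. This smoothing step is the crux of the argument: the content is precisely that the node-smoothing directions are genuine directions in the (smooth) deformation space, so that the nodal locus contains no irreducible component of $\mathscr{M}_X$.

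For the second assertion, suppose $S \to \overline{\Bun}\!\,^K_B$ is smooth; then $S$ is smooth over $k$, and the open locus $S^\circ \subseteq S$ lying over $\Bun_B$ is, by the same flat-pullback argument, dense, so $S \setminus S^\circ$ has codimension at least $1$. Let $Z \subseteq X \times S$ be the closed locus where $p : C \to X \times S$ fails to be an isomorphism. Over $S^\circ$ the curve $C$ is a constant copy of $X$, so $Z \subseteq X \times (S \setminus S^\circ)$. Moreover, for each geometric point $s \in S$ the non-isomorphism locus of $p_s : C_s \to X$ is the finite set of points of $X$ over which a tree of $\mathbb{P}^1$'s is attached, so the projection $Z \to S$ has finite fibers. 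Combining these, a fiber-dimension count gives $\dim Z \le \dim(S \setminus S^\circ) \le \dim S - 1 = \dim(X \times S) - 2$, which is the desired codimension bound. The main obstacle throughout is the smoothing of nodes in the second paragraph; once it is in hand, both the density statement and the codimension estimate follow formally from flatness and an elementary count of fiber dimensions.
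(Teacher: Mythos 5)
Your proof is correct, but the density half takes a genuinely different route from the paper. The paper disposes of density in one line by invoking the fact (Propositions \ref{kontsm} and \ref{ncd}) that the complement of $\Bun_B$ is a normal crossings divisor in the smooth stack $\overline{\Bun}\!\,^K_B$, so in particular has codimension one; this is a forward reference to the explicit chart construction of Proposition \ref{ncd}. You instead reduce, via the smooth projection $\pi$ and going-down for flat maps, to density of the open point of $\mathscr{M}_X$, and prove that by smoothing the nodes, using unobstructedness. This is a legitimate and more self-contained argument: it needs only the smoothness of $\mathscr{M}_X$, not the full normal crossings structure of its boundary. The one spot you flag as the crux --- that the node-smoothing directions are genuine tangent directions, i.e.\ that $H^1(R\Gamma(C,T(C/X)))$ surjects onto the local smoothing parameters $H^0(C,\mathscr{H}^1(T(C/X)))$ --- can be closed cheaply: the cokernel of that map is controlled by $H^2$ of a coherent sheaf on the curve $C$, which vanishes, and then unobstructedness integrates the first-order smoothing to an actual family; the general member is a smooth projective curve with a proper degree-one (hence birational, hence iso) map to $X$. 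Your proof of the second assertion (the non-isomorphism locus $Z$ is finite over $S$ and lies over the proper closed subset $S \setminus S^\circ$, so $\dim Z \le \dim S - 1 = \dim(X\times S) - 2$) is essentially identical to the paper's.
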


\begin{proof}

Density follows from the fact that the complement of $\Bun_B$ in $\overline{\Bun}\!\,^K_B$ is a normal crossings divisor: combine Proposition \ref{kontsm} and Proposition \ref{ncd} below.

Notice that the locus $D \subset X \times S$ over which $p$ is not an isomorphism is finite over $S$. If $S \to \overline{\Bun}\!\,^K_B$ is smooth then the image of $D$ in $S$ in a proper subset, whence the claim.

\end{proof}

\subsection{} Fix a morphism $\xi : S \to \overline{\Bun}\!\,^K_B$ with $S$ an affine scheme. To define the desired map $S \to \overline{\Bun}_B$ we use the $G$-bundle $\mathscr{P}_G$ on $X \times S$ obtained from $\xi$, so we still need to specify a $T$-bundle $\mathscr{P}_T$ and, for every $\check{\lambda} \in \check{\Lambda}^+$, a monomorphism of coherent sheaves \[ \kappa^{\check{\lambda}} : \check{\lambda}(\mathscr{P}_T) \longrightarrow V^{\check{\lambda}}_{\mathscr{P}_G} \] satisfying the Pl\"{u}cker relations.

The point $\xi$ also includes a family of curves $C \to S$ with a degree one map $p : C \to X \times S$ and a $B$-reduction \[ G \overset{B}{\times} \mathscr{P}_B \tilde{\longrightarrow} p^*\mathscr{P}_G, \] which can be viewed as a collection of vector bundle embeddings \[ \iota^{\check{\lambda}} : \check{\lambda}(\mathscr{P}_B) \longrightarrow p^*V^{\check{\lambda}}_{\mathscr{P}_G} \] for $\check{\lambda} \in \check{\Lambda}^+$.

\begin{proposition}

For every $\check{\lambda} \in \check{\Lambda}$ the direct image $p_*\check{\lambda}(\mathscr{P}_B)$ is a perfect complex on $X \times S$, so that its determinant line bundle $\det(p_*\check{\lambda}(\mathscr{P}_B))$ is well-defined. There exists a unique $T$-bundle $\mathscr{P}_T$ on $X \times S$ such that $\check{\lambda}(\mathscr{P}_T) = \det(p_*\check{\lambda}(\mathscr{P}_B))$ for all $\check{\lambda} \in \check{\Lambda}$, and for all $\check{\lambda} \in \check{\Lambda}^+$ the embeddings $\iota^{\check{\lambda}}$ canonically induce monomorphisms of coherent sheaves \[ \kappa^{\check{\lambda}} : \check{\lambda}(\mathscr{P}_T) \longrightarrow V^{\check{\lambda}}_{\mathscr{P}_G} \] satisfying the Pl\"{u}cker relations.

\label{resprop}

\end{proposition}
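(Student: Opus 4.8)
The plan is to establish the three assertions in turn: perfectness of $p_*\check{\lambda}(\mathscr{P}_B)$, the existence and uniqueness of $\mathscr{P}_T$, and the construction of the maps $\kappa^{\check{\lambda}}$ with their Pl\"{u}cker relations. Throughout I write $p_*$ for derived pushforward, and the geometric input I rely on is that $p : C \to X \times S$ is proper, of degree one, and an isomorphism away from a closed subset $D \subset X \times S$ which is finite over $S$ (as recorded in the proof of Lemma \ref{dense}). For the first assertion I would note that $\check{\lambda}(\mathscr{P}_B)$ is a line bundle, hence perfect, so it suffices to check that $p$ has finite Tor-dimension, for then the proper morphism $p$ carries perfect complexes to perfect complexes. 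Since $X \times S$ is smooth over $S$ and $C$ is flat over $S$ with nodal fibres, a fibrewise computation over $S$ reduces this to the Tor-dimension of a map $p_s : C_s \to X_{\kappa(s)}$ into a smooth curve over a field, where every coherent sheaf has Tor-dimension at most one; finiteness is thus automatic. The line bundle $\det(p_*\check{\lambda}(\mathscr{P}_B))$ is then the determinant of a perfect complex, whose formation commutes with base change in $S$.

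For the second assertion, uniqueness is formal: because $\check{\Lambda}$ is free of finite rank, a $T$-bundle on $X \times S$ is precisely an assignment $\check{\lambda} \mapsto \check{\lambda}(\mathscr{P}_T)$ of line bundles together with coherent multiplicativity isomorphisms $\check{\lambda}(\mathscr{P}_T) \otimes \check{\mu}(\mathscr{P}_T) \cong (\check{\lambda}+\check{\mu})(\mathscr{P}_T)$, so the identity $\check{\lambda}(\mathscr{P}_T) = \det(p_*\check{\lambda}(\mathscr{P}_B))$ pins $\mathscr{P}_T$ down. Existence thus amounts to producing such multiplicativity isomorphisms for the determinant line bundles. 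Over the open locus $U = (X \times S) \setminus D$, where $p$ is an isomorphism, one has a canonical identification of $\det(p_*\check{\lambda}(\mathscr{P}_B))|_U$ with $\check{\lambda}(\mathscr{P}_B)$ transported along $p$, and there the multiplicativity is immediate from $(\check{\lambda}+\check{\mu})(\mathscr{P}_B) = \check{\lambda}(\mathscr{P}_B) \otimes \check{\mu}(\mathscr{P}_B)$. The remaining task, extending these canonical isomorphisms across $D$, is the heart of the matter.

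This extension step is the main obstacle, since the determinant of cohomology is not multiplicative for a general contraction; what saves the situation is the genus-zero, codimension-two geometry of the contracted locus rather than any formal property of $\det$. My plan is to reduce to the case where $S$ is smooth over $k$ with $S \to \overline{\Bun}\!\,^K_B$ smooth: this is legitimate because $\det(p_*-)$ commutes with base change and isomorphisms of line bundles satisfy fppf descent, while the extended isomorphisms, being determined by their restriction to the dense locus $U$, automatically carry descent data. For such $S$, Lemma \ref{dense} guarantees that $D$ has codimension at least two in the regular scheme $X \times S$. Since isomorphisms between line bundles on a normal scheme extend uniquely across a closed subset of codimension at least two, the multiplicativity isomorphisms over $U$ extend canonically over all of $X \times S$, producing $\mathscr{P}_T$; transporting back to arbitrary $S$ by base change and descent completes the construction.

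For the third assertion, I would restrict the embedding $\iota^{\check{\lambda}} : \check{\lambda}(\mathscr{P}_B) \hookrightarrow p^*V^{\check{\lambda}}_{\mathscr{P}_G}$ to $U$, where $p$ is an isomorphism, obtaining a map $\check{\lambda}(\mathscr{P}_T)|_U \to V^{\check{\lambda}}_{\mathscr{P}_G}|_U$. Working once more over a smooth $S$ and using that $V^{\check{\lambda}}_{\mathscr{P}_G}$ is locally free, this map extends uniquely across the codimension-two locus $D$ to the desired $\kappa^{\check{\lambda}} : \check{\lambda}(\mathscr{P}_T) \to V^{\check{\lambda}}_{\mathscr{P}_G}$, and then descends to general $S$. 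It is a monomorphism of coherent sheaves: its source is a line bundle and it is injective on the dense open $U$, so its kernel, a subsheaf of a torsion-free sheaf supported on a proper closed subset, vanishes. Finally the Pl\"{u}cker relations among the $\kappa^{\check{\lambda}}$ hold over $U$, being inherited from those satisfied by the $\iota^{\check{\lambda}}$ for the genuine $B$-bundle $\mathscr{P}_B$, and as equalities of maps of torsion-free sheaves valid on a dense open they hold everywhere.
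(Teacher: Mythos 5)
Your proposal is correct and follows essentially the same route as the paper: reduce to the case of a smooth chart $S$, identify everything over the locus where $p$ is an isomorphism, and use Lemma \ref{dense} together with normality of $X \times S$ to extend the multiplicativity isomorphisms and the maps $\kappa^{\check{\lambda}}$ across the codimension-$\geq 2$ locus. The only (harmless) variation is in the perfectness step, where you invoke finite Tor-dimension of $p$ checked fibrewise rather than the paper's appeal to regularity of $X \times S$ after the reduction to smooth $S$.
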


\begin{proof}

By a standard descent argument we reduce to the case that $\xi : S \to \overline{\Bun}\!\,^K_B$ is smooth. Then $S$ is smooth by Proposition \ref{kontsm}, so $p_*\check{\lambda}(\mathscr{P}_B)$ is a perfect complex because $p$ is proper and $X \times S$ is smooth. To show that $\mathscr{P}_T$ exists, we claim that the isomorphisms \[ (\check{\lambda} + \check{\mu})(\mathscr{P}_B) \tilde{\longrightarrow} \check{\lambda}(\mathscr{P}_B) \otimes \check{\mu}(\mathscr{P}_B) \] on $C$ canonically give rise to isomorphisms \[ \det(p_*(\check{\lambda} + \check{\mu})(\mathscr{P}_T)) \tilde{\longrightarrow} \det(p_*\check{\lambda}(\mathscr{P}_T)) \otimes \det(p_*\check{\mu}(\mathscr{P}_T)) \] on $X \times S$. By Lemma \ref{dense}, these isomorphisms are defined away from a locus of codimension at least $2$, whence they extend uniquely to $X \times S$. Similarly, apply Lemma \ref{dense} to obtain the $\kappa^{\check{\lambda}}$ from the $\iota^{\check{\lambda}}$. The Pl\"{u}cker relations on $X \times S$ are inherited from those on $C$ by construction.

\end{proof}

We summarize the construction of the map as follows.

\begin{proposition}

The construction in Proposition \ref{resprop} defines a proper morphism $\overline{\Bun}\!\,^K_B \to \overline{\Bun}_B$ which restricts to the identity on $\Bun_B$ and commutes with the projections to $\Bun_G$.

\end{proposition}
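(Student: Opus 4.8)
The statement bundles four claims: that the construction of Proposition \ref{resprop} is functorial in $S$, so that it genuinely defines a morphism of stacks $\overline{\Bun}\!\,^K_B \to \overline{\Bun}_B$; that this morphism is compatible with the two embeddings of $\Bun_B$; that it commutes with the projections to $\Bun_G$; and that it is proper. My plan is to verify the first three points by directly unwinding the construction and to deduce properness formally from the properness of both sides over $\Bun_G$.

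First I would address functoriality, where the only real content is compatibility with base change along an arbitrary $S' \to S$. The $G$-bundle $\mathscr{P}_G$ and the embeddings $\iota^{\check{\lambda}}$ pull back tautologically. The formation of the determinant line bundle $\det(p_*\check{\lambda}(\mathscr{P}_B))$ commutes with base change because $p$ is proper and the determinant of a perfect complex is compatible with derived pullback, so the $T$-bundle $\mathscr{P}_T$ pulls back correctly. Finally each $\kappa^{\check{\lambda}}$ was characterized as the \emph{unique} extension, across a locus of codimension at least $2$, of the morphism induced by $\iota^{\check{\lambda}}$ (Lemma \ref{dense}); since such an extension is unique it is automatically compatible with pullback. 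Combining these observations with descent from a smooth atlas, which is available by Proposition \ref{kontsm}, yields a well-defined morphism of stacks.

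Next I would check the two compatibilities, both of which are immediate. Over the open locus $\Bun_B \subset \overline{\Bun}\!\,^K_B$ the projection $p$ is an isomorphism, so $p_*\check{\lambda}(\mathscr{P}_B) = \check{\lambda}(\mathscr{P}_B)$ is already a line bundle equal to its own determinant; hence $\mathscr{P}_T$ is the $T$-bundle induced from $\mathscr{P}_B$ and $\kappa^{\check{\lambda}} = \iota^{\check{\lambda}}$, which is exactly the tautological open immersion $\Bun_B \hookrightarrow \overline{\Bun}_B$. Since $\mathscr{P}_G$ is carried over unchanged, the triangle over $\Bun_G$ commutes by construction.

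Finally I would prove properness. On the smooth locus $\Bun_B^\lambda$ one computes $\deg \mathscr{P}_T = \lambda$, so by density (Lemma \ref{dense}) the morphism sends $\overline{\Bun}\!\,^{K,\lambda}_B$ into the degree-$\lambda$ component $\overline{\Bun}^\lambda_B$. Drinfeld's compactification $\overline{\Bun}^\lambda_B \to \Bun_G$ is proper, hence separated, by \cite{BG}, while $\overline{\mathfrak{p}}\!\,^{K,\lambda} : \overline{\Bun}\!\,^{K,\lambda}_B \to \Bun_G$ is proper by the proposition above. Since the constructed map commutes with these two projections, the cancellation property for proper morphisms -- if $g \circ f$ is proper and $g$ is separated, then $f$ is proper -- shows that $\overline{\Bun}\!\,^{K,\lambda}_B \to \overline{\Bun}^\lambda_B$ is proper. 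The step I expect to require the most care is the base-change compatibility of the codimension-$2$ extension in the functoriality argument; the remaining assertions are either a direct reading of the construction or a formal consequence of properness over $\Bun_G$.
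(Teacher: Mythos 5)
Your proposal is correct and follows the paper's proof in all essentials: the morphism of stacks is obtained from base-change compatibility of the determinant construction, the compatibilities over $\Bun_B$ and $\Bun_G$ are read off directly, and properness is deduced by cancellation from the properness of $\overline{\Bun}\!\,^{K,\lambda}_B \to \Bun_G$ and the (separated) properness of $\overline{\Bun}{}^{\lambda}_B \to \Bun_G$. The one point where you genuinely diverge is the verification that the map preserves degree, which is needed to know it lands in the fixed component $\overline{\Bun}{}^{\lambda}_B$ before cancellation can be applied. The paper gets this from the forward reference to Proposition \ref{kontdef}, whose explicit computation of $\det(Rp_*\mathscr{L})$ shows that the image point has degree $\deg(\mathscr{P}_B)$ and moreover identifies its defect and saturation. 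You instead argue that the degree of the image is locally constant, equals $\lambda$ on the dense open $\Bun^{\lambda}_B \subset \overline{\Bun}\!\,^{K,\lambda}_B$ (Lemma \ref{dense}), and hence equals $\lambda$ everywhere. This is a valid and arguably cleaner route for the purposes of this proposition alone, since it avoids the forward dependence; what it does not buy you is the finer information of Proposition \ref{kontdef} (matching of defects), which the paper needs later anyway. Your expanded treatment of base-change compatibility of $\det(p_*\check{\lambda}(\mathscr{P}_B))$ and of the uniqueness of the codimension-$2$ extension correctly fills in what the paper compresses into the phrase ``by base change for quasi-coherent sheaves.''
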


\begin{proof}

Since $\det$ commutes with inverse image, the construction defines a morphism of stacks by base change for quasi-coherent sheaves. It clearly restricts to the identity on $\Bun_B$ and commutes with the projections to $\Bun_G$. Since $\overline{\Bun}\!\,^{K,\lambda}_B \to \Bun_G$ is proper, so is $\overline{\Bun}\!\,^{K,\lambda}_B \to \overline{\Bun}_B$. Proposition \ref{kontdef} below shows that $\overline{\Bun}\!\,^K_B \to \overline{\Bun}_B$ preserves degree and hence is proper.

\end{proof}

\subsection{} The defect of a geometric point $\xi : \Spec k \to \overline{\Bun}\!\,^K_B$ is the $\Lambda^{\pos}$-weighted effective divisor defined as follows. Let $C_1,\cdots,C_n$ be the connected components of $C \setminus \mathring{X}$ and $\{ x_i \} = C_i \cap X$. Then the defect of $\xi$ is \[ -\sum_{i=1}^n \deg(\mathscr{P}_B|_{C_i}) \cdot x_i. \] For any $\mu \in \Lambda^{\pos}$ we denote by $\overline{\Bun}\!\,^K_{B,\leq \mu}$ the open locus where \[ -\sum_{i=1}^n \deg(\mathscr{P}_B|_{C_i}) \leq \mu. \]

\begin{proposition}

The image of $\xi : \Spec k \to \overline{\Bun}\!\,^K_B$ under $\overline{\Bun}\!\,^K_B \to \overline{\Bun}_B$ has the same defect as $\xi$, and its saturation is obtained by restricting the $B$-reduction on $C$ along the section $s : X \to C$. In particular $\overline{\Bun}\!\,^K_B \to \overline{\Bun}_B$ preserves degree.

\label{kontdef}

\end{proposition}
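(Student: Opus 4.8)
The plan is to compute the image point of $\xi$ in $\overline{\Bun}_B$ explicitly on the underlying curve and match it against the saturation-plus-defect description of points of Drinfeld's compactification. At the geometric point write $C = X \cup C_1 \cup \cdots \cup C_n$, where $X \subset C$ is the distinguished component carrying the section $s : X \to C$, each $C_i$ is an arithmetic genus zero tree of projective lines meeting $X$ in the single point $x_i$, and set $\mu_i := \deg(\mathscr{P}_B|_{C_i}) \in \Lambda$. Fix a dominant weight $\check{\lambda}$ and put $L := \check{\lambda}(\mathscr{P}_B)$, a line bundle on $C$. Recall from Proposition \ref{resprop} that the image point has $T$-bundle determined by $\check{\lambda}(\mathscr{P}_T) = \det(p_*L)$ (derived pushforward), and that its Pl\"{u}cker maps $\kappa^{\check{\lambda}}$ are the extensions of $\iota^{\check{\lambda}}$; these descriptions base-change to the geometric point because determinants of perfect complexes commute with base change.

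The computational heart is the identification
\begin{equation}
\det(p_*L) \;\cong\; L|_X \otimes \mathscr{O}_X\Big( \sum_{i=1}^n \langle \check{\lambda}, \mu_i \rangle \, x_i \Big), \qquad \langle \check{\lambda}, \mu_i \rangle = \deg(L|_{C_i}),
\label{detformula}
\end{equation}
which I would establish from the normalization sequence $0 \to \mathscr{O}_C \to \mathscr{O}_X \oplus \bigoplus_i \mathscr{O}_{C_i} \to \bigoplus_i k_{x_i} \to 0$. Tensoring with $L$ and applying $p_*$ realizes $p_*L$ in a triangle with $p_*(L|_X) = L|_X$, the complexes $p_*(L|_{C_i})$ supported at $x_i$, and the skyscrapers $L_{x_i}$. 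Multiplicativity of the determinant then reduces (\ref{detformula}) to the fact that a complex supported at $x_i$ of Euler characteristic $\chi$ has determinant $\mathscr{O}_X(\chi \cdot x_i)$, together with $\chi(C_i, L|_{C_i}) = \deg(L|_{C_i}) + 1$, since $C_i$ has arithmetic genus zero; the correction by $\det(L_{x_i})^{-1} = \mathscr{O}_X(-x_i)$ cancels the $+1$. This is the step I expect to require the most care, chiefly in pinning down the contributions of the contracted bubbles and the bookkeeping of signs. Since the defect is $\Lambda^{\pos}$-valued by definition, $-\mu_i \in \Lambda^{\pos}$, so $\langle \check{\lambda}, \mu_i \rangle \leq 0$ and (\ref{detformula}) exhibits $\check{\lambda}(\mathscr{P}_T)$ as a subsheaf of $L|_X = \check{\lambda}(s^*\mathscr{P}_B)$ with cokernel of length $\langle \check{\lambda}, -\mu_i \rangle$ at $x_i$.

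Next I would identify the Pl\"{u}cker map. Over $\mathring{X}$ the projection $p$ is an isomorphism, so there $p_*L = L$, the isomorphism (\ref{detformula}) is the canonical one, and $\kappa^{\check{\lambda}}|_{\mathring{X}}$ coincides with $\iota^{\check{\lambda}}|_{\mathring{X}} = (\iota^{\check{\lambda}}|_X)|_{\mathring{X}}$. Both $\kappa^{\check{\lambda}}$ and the composite $\check{\lambda}(\mathscr{P}_T) \hookrightarrow L|_X \xrightarrow{\iota^{\check{\lambda}}|_X} V^{\check{\lambda}}_{\mathscr{P}_G}$ are maps out of the torsion-free sheaf $\check{\lambda}(\mathscr{P}_T)$ into the locally free sheaf $V^{\check{\lambda}}_{\mathscr{P}_G}$ over the integral curve $X$, and they agree on the dense open $\mathring{X}$; since a homomorphism of a torsion-free sheaf into a torsion-free sheaf on an integral scheme is determined by its restriction to any dense open, they agree. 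Because $\iota^{\check{\lambda}}$ is a subbundle inclusion on $C$ and $s$ lands in the smooth locus, the restriction $\iota^{\check{\lambda}}|_X$ is again a subbundle inclusion, i.e. saturated. Hence the saturation of $\kappa^{\check{\lambda}}$ is exactly $\iota^{\check{\lambda}}|_X$, realized by the genuine $B$-bundle $s^*\mathscr{P}_B$; this is the assertion that the saturation of the image of $\xi$ is obtained by restricting the $B$-reduction along $s$.

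Finally I would read off defects and degrees. By the previous two paragraphs the cokernel of $\check{\lambda}(\mathscr{P}_T) \hookrightarrow \check{\lambda}(s^*\mathscr{P}_T)$ has length $\langle \check{\lambda}, -\mu_i \rangle$ at $x_i$ for every dominant $\check{\lambda}$, so the defect coweight of the image at $x_i$ is $-\mu_i$ and the total defect is $-\sum_i \mu_i \, x_i$, which is precisely the defect of $\xi$. For degree preservation I would invoke the standard identity in $\overline{\Bun}_B$ that the degree of a point equals the degree of its saturation minus the total defect (see \cite{BG}): here it reads $\deg \mathscr{P}_T = \deg(s^*\mathscr{P}_T) - \sum_i(-\mu_i) = \deg(\mathscr{P}_B|_X) + \sum_i \mu_i = \deg \mathscr{P}_B$, the last equality by additivity of degree over the irreducible components of the nodal curve $C$. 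Since the degree of $\xi$ is $\deg \mathscr{P}_B$, this gives the final clause; alternatively, the same equality of coweights falls out directly by evaluating (\ref{detformula}) against a basis of $\check{\Lambda}$.
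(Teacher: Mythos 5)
Your argument is correct and follows essentially the same route as the paper: the normalization sequence for $C$ relative to its distinguished component, pushforward along $p$, computation of $\det(p_*\check{\lambda}(\mathscr{P}_B))$ as $s^*\check{\lambda}(\mathscr{P}_B)$ twisted down by the degrees on the bubbles, and identification of the Pl\"{u}cker map as factoring through the subbundle inclusion $s^*\iota^{\check{\lambda}}$. The only difference is cosmetic: by invoking multiplicativity of the determinant and the fact that a complex supported at $x_i$ of Euler characteristic $\chi$ has determinant $\mathscr{O}_X(\chi\cdot x_i)$, you treat all components at once and avoid the paper's reduction to $n=1$ and its case split on whether $\check{\lambda}(\mathscr{P}_B)|_{C_1}$ is trivial.
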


\begin{proof}

Suppose that $n=1$: the general case will follow by a straightforward induction. Fix $\check{\lambda} \in \check{\Lambda}^+$ and put $\mathscr{L} = \check{\lambda}(\mathscr{P}_B)$. Consider the short exact sequence \[ 0 \longrightarrow \mathscr{L} \longrightarrow s_*s^*(\mathscr{L}) \oplus i_*i^*(\mathscr{L}) \longrightarrow \delta_{x_1} \longrightarrow 0 \] on $C$, where $i : C_1 \to C$ is the inclusion. Pushing forward to $X$, we obtain the exact triangle \[ p_*\mathscr{L} \longrightarrow s^*(\mathscr{L}) \oplus (\delta_{x_1} \otimes R\Gamma(C_1,i^*\mathscr{L})) \longrightarrow \delta_{x_1}. \] The claim is that $\det(Rp_*\mathscr{L}) \tilde{\to} s^*(\mathscr{L})(d \cdot x_1)$, where $d := \deg(i^*\mathscr{L}) \leq 0$.

Suppose that $i^*\mathscr{L}$ is nontrivial. Since $s^*\mathscr{L} \to \delta_{x_1}$ is an epimorphism, we get a short exact sequence \[ 0 \longrightarrow R^0p_*\mathscr{L} \longrightarrow s^*\mathscr{L} \longrightarrow \delta_{x_1} \longrightarrow 0 \] and an isomorphism \[ R^1p_*\mathscr{L} \tilde{\longrightarrow} \delta_{x_1} \otimes R^1\Gamma(C_1,i^*\mathscr{L}). \] Thus $R^0p_*\mathscr{L} = s^*(\mathscr{L})(-x_1)$ and $\det(R^1p_*\mathscr{L}) = \mathscr{O}_X((-d-1) \cdot x_1)$. The monomorphism of coherent sheaves associated with $\Spec k \to \overline{\Bun}\!\,^K_B \to \overline{\Bun}_B$ and $\check{\lambda}$ is the composition \[ \det(p_*\mathscr{L}) \tilde{\longrightarrow} s^*(\mathscr{L})(d \cdot x) \longrightarrow s^*\mathscr{L} \longrightarrow s^*p^*V^{\check{\lambda}}_{\mathscr{P}_G} = V^{\check{\lambda}}_{\mathscr{P}_G}, \] and since the last map is a vector bundle embedding the claim follows in this case.

If $i^*\mathscr{L}$ is trivial (this can only occur if the semisimple rank of $G$ is at least $2$), then we have instead a short exact sequence \[ 0 \longrightarrow R^0p_*\mathscr{L} \longrightarrow s^*(\mathscr{L}) \oplus \delta_{x_1} \longrightarrow \delta_{x_1} \longrightarrow 0 \] and $R^1p_*\mathscr{L} = 0$. Therefore $R^0p_*\mathscr{L} \tilde{\to} s^*\mathscr{L}$, so the claim is immediate in this case.

\end{proof}

\section{Relative smoothness and local acyclicity} 

\label{ulasec}

\subsection{} Now we show that the Kontsevich compactification is smooth over $\Bun_G$ at points sufficiently antidominant relative to their defect.

\begin{proposition}

Fix $\mu \in \Lambda^{\pos}$. If $\lambda \in \Lambda$ satisfies $\langle \check{\alpha},\lambda - \mu' \rangle > 2g-2$ for all negative roots $\check{\alpha}$ and all $0 \leq \mu' \leq \mu$, then the morphism \[ \overline{\Bun}\!\,^{K,\lambda}_{B,\leq \mu} \longrightarrow \mathscr{M}_X \times \Bun_G \] is smooth. In particular, the composition with $\mathscr{M}_X \times \Bun_G \to \Bun_G$ is smooth.

\label{relsm}

\end{proposition}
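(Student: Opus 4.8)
The plan is to prove smoothness of the map $\overline{\Bun}\!\,^{K,\lambda}_{B,\leq \mu} \to \mathscr{M}_X \times \Bun_G$ by a deformation-theoretic obstruction computation at a geometric point $\xi : \Spec k \to \overline{\Bun}\!\,^{K,\lambda}_{B,\leq \mu}$ lying over $p : C \to X$ and $\mathscr{P}_G$. Recall from the proof of Proposition \ref{kontsm} that $\overline{\Bun}\!\,^K_B$ fibers over $\mathscr{M}_X \times \Bun_G$ with fiber an open substack (cut out by stability) of the relative mapping stack parameterizing $B$-reductions of the pulled-back $G$-bundle $p^*\mathscr{P}_G$ along $C$. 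Since both source and target are smooth, smoothness of the projection is equivalent to surjectivity of the relative tangent map, and the obstruction to deforming a $B$-reduction with $C$ and $\mathscr{P}_G$ held fixed lives in $R^2\Gamma(C, (\mathfrak{g}/\mathfrak{b})_{\mathscr{P}_B})$. As $C$ is a (possibly nodal) curve this $R^2$ vanishes for dimension reasons on each component, so the real content is the vanishing of $R^1\Gamma(C,(\mathfrak{g}/\mathfrak{b})_{\mathscr{P}_B})$, which guarantees that the fiber is smooth \emph{and} that no first-order deformation is obstructed; this is what I would isolate as the key cohomological statement to verify.

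The strategy for the vanishing is to filter $(\mathfrak{g}/\mathfrak{b})_{\mathscr{P}_B}$ by the negative root spaces: the associated graded pieces are the line bundles $\check{\alpha}(\mathscr{P}_B)$ indexed by the negative roots $\check{\alpha}$ of $G$. By the long exact sequence in cohomology it therefore suffices to show $H^1(C, \check{\alpha}(\mathscr{P}_B)) = 0$ for every negative root $\check{\alpha}$. I would establish this by analyzing $C$ via its normalization at the nodes, writing $C = X \cup C_1 \cup \cdots \cup C_n$ where each $C_i \cong \mathbb{P}^1$ meets the rest of $C$ in a tree, and using a Mayer--Vietoris / normalization exact sequence to reduce the computation to the individual components together with the gluing conditions at the nodes. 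On $X$ the relevant line bundle has degree $\langle \check{\alpha}, \lambda' \rangle$ where $\lambda'$ is the degree of $\mathscr{P}_B$ restricted to $X$, i.e. $\lambda$ minus the defect contributions, and the hypothesis $\langle \check{\alpha}, \lambda - \mu' \rangle > 2g-2$ for all $0 \leq \mu' \leq \mu$ is precisely designed to force $\deg \check{\alpha}(\mathscr{P}_B|_X) > 2g-2 = \deg \omega_X$, hence $H^1(X, \check{\alpha}(\mathscr{P}_B|_X)) = 0$ by Serre duality. On each rational component $C_i$ the degree of $\check{\alpha}(\mathscr{P}_B|_{C_i})$ is $\leq 0$ by definition of the defect, but since $\check{\alpha}$ is a \emph{negative} root and the defect is effective, the relevant degrees are controlled so that $H^1(\mathbb{P}^1, \mathscr{O}(d)) = 0$ for $d \geq -1$; the tree structure ensures the node-gluing maps are surjective on $H^0$, so no extra $H^1$ is introduced.

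The main obstacle I anticipate is the bookkeeping at the nodes on the rational components, where the line bundle degrees can in principle be very negative on a $C_i$ and the naive per-component vanishing can fail. The point is that the constraint $-\sum_i \deg(\mathscr{P}_B|_{C_i}) \leq \mu$ defining $\overline{\Bun}\!\,^K_{B,\leq \mu}$ bounds the total defect, and the antidominance hypothesis is stated uniformly over all $0 \leq \mu' \leq \mu$ precisely so that redistributing degree between $X$ and the $C_i$ never violates positivity of $\langle\check\alpha,-\rangle$ on the relevant pieces. I would handle this by an induction on the number $n$ of rational tails, peeling off an outermost leaf $C_i$ of the tree (which meets the rest of $C$ in a single node), using the normalization sequence to compare $H^1(C,\check\alpha(\mathscr{P}_B))$ with $H^1(C',\check\alpha(\mathscr{P}_B|_{C'})) \oplus H^1(C_i, \check\alpha(\mathscr{P}_B|_{C_i}))$ for the smaller curve $C'$, and checking that the connecting map at the single gluing node is surjective because $H^0(C_i,\check\alpha(\mathscr{P}_B|_{C_i})) \to (\check\alpha(\mathscr{P}_B))_{x_i}$ is onto whenever the degree on $C_i$ is $\geq 0$, which is where the sign of the negative root enters decisively. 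Once the $H^1$ vanishing is in hand, surjectivity of the tangent map follows formally, and smoothness over $\mathscr{M}_X \times \Bun_G$ — together with the final sentence, which is immediate since $\mathscr{M}_X \times \Bun_G \to \Bun_G$ is smooth as $\mathscr{M}_X$ is smooth by Proposition \ref{kontsm} — is established.
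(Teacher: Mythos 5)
Your overall skeleton matches the paper's: identify the fiber over $\mathscr{M}_X \times \Bun_G$ with an open locus in a mapping stack, reduce smoothness to the vanishing of $R^1\Gamma(C,(\mathfrak{g}/\mathfrak{b})_{\mathscr{P}_B})$, handle the component $X$ via the antidominance hypothesis, and treat the rational tails by peeling off leaves of the tree. But there is a genuine gap in your treatment of the rational components. You apply the root filtration on all of $C$ and reduce to $H^1(C_i,\check{\alpha}(\mathscr{P}_T)|_{C_i})=0$ for each negative root $\check{\alpha}$, asserting that the degree of $\check{\alpha}(\mathscr{P}_T)|_{C_i}$ is $\geq 0$ (or at worst $\geq -1$) ``because $\check{\alpha}$ is a negative root and the defect is effective.'' This fails once the semisimple rank is at least $2$: the defect condition says $\deg(\mathscr{P}_B|_{C_i})=-\nu_i$ with $\nu_i\in\Lambda^{\pos}$, but the pairing of a negative root with $-\nu_i$ need not be nonnegative. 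In type $A_2$, with $\nu_i=2\alpha_1^\vee$ and $\check{\alpha}=-\alpha_2$ one gets $\langle\check{\alpha},-\nu_i\rangle = 2\langle\alpha_2,\alpha_1^\vee\rangle=-2$, so $H^1(\mathbb{P}^1,\mathscr{O}(-2))\neq 0$ and the per-graded-piece vanishing is simply false. Neither the bound on the total defect nor the hypothesis on $\lambda$ (which only controls $\deg(\mathscr{P}_B|_X)$) repairs this, so your anticipated ``redistribution'' step has nothing to work with.

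The missing idea is that on a rational tail one must keep the bundle whole rather than pass to the associated graded: $(\mathfrak{g}/\mathfrak{b})_{\mathscr{P}_B}|_{C_0}$ is the pullback along $C_0 \to (G/B)_{\mathscr{P}_G}$ of the relative tangent bundle of the flag variety, which is globally generated (a quotient of $\mathfrak{g}_{\mathscr{P}_G}\otimes\mathscr{O}$), hence on $C_0\cong\mathbb{P}^1$ is a direct sum of line bundles of nonnegative degree; therefore $R^1\Gamma(C_0,(\mathfrak{g}/\mathfrak{b})_{\mathscr{P}_B}|_{C_0}(-c))=0$ even after twisting down by the single node $c$. This is exactly how the paper runs the induction: the short exact sequence \[ 0 \longrightarrow (\mathfrak{g}/\mathfrak{b})_{\mathscr{P}_B}|_{C_0}(-c) \longrightarrow (\mathfrak{g}/\mathfrak{b})_{\mathscr{P}_B} \longrightarrow (\mathfrak{g}/\mathfrak{b})_{\mathscr{P}_B}|_{C'} \longrightarrow 0 \] reduces to the smaller curve $C'$, the root filtration is invoked only in the base case $C'=X$, and global generation does all the work on the leaves. (A filtration of a globally generated bundle on $\mathbb{P}^1$ can have graded pieces of very negative degree, which is why the two steps cannot be merged as you propose.) The rest of your argument --- the identification of the fibers, the reduction to $H^1$-vanishing of the relative tangent complex, and the final sentence about composing with the smooth projection $\mathscr{M}_X\times\Bun_G\to\Bun_G$ --- is fine.
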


\begin{proof}

The fiber over $p : C \to X$ and $\mathscr{P}_G$ is the open locus in $\Map_X(C,(G/B)_{\mathscr{P}_G})$ consisting of stable maps. Fixing a point $f : C \to (G/B)_{\mathscr{P}_G}$ in the fiber, we have \[ T_f(\Map_X(C,(G/B)_{\mathscr{P}_G})) = R\Gamma(C,f^*T((G/B)_{\mathscr{P}_G}/X)). \] It therefore suffices to prove that \[ R^1\Gamma(C,f^*T((G/B)_{\mathscr{P}_G}/X)) = 0 \] when the degree $\lambda$ of $f$ (i.e. the degree of the resulting $B$-bundle $\mathscr{P}_B$ on $C$) satisfies the given hypothesis.

One can check that \[ f^*T((G/B)_{\mathscr{P}_G}/X) \tilde{\longrightarrow} (\mathfrak{g}/\mathfrak{b})_{\mathscr{P}_B}. \] The latter vector bundle can be filtered by subbundles so that the associated graded is $(\mathfrak{n}^-)_{\mathscr{P}_T}$, where $\mathscr{P}_T$ is the $T$-bundle induced by $\mathscr{P}_B$. Let $\lambda_X = \deg(\mathscr{P}_B|_X)$ and note that $\lambda - \lambda_X \leq \mu$. Thus the hypothesis on $\lambda$ implies that \[ \langle \lambda_X,\check{\alpha} \rangle = \langle \lambda,\check{\alpha} \rangle - \langle \lambda - \lambda_X,\check{\alpha} \rangle > 2g-2 \] for any negative root $\check{\alpha}$, whence $R^1\Gamma(X,\check{\alpha}(\mathscr{P}_T)) = 0$.

Now we induct on the number of components in $C$. Fix a leaf $C_0$ of the tree $C$, i.e. an irreducible component other than $X$ which contains a single node $c$, and put $C' := C \setminus (C_0 \setminus \{ c \})$. Then we have a short exact sequence \[ 0 \longrightarrow (\mathfrak{g}/\mathfrak{b})_{\mathscr{P}_B}|_{C_0}(-c) \longrightarrow (\mathfrak{g}/\mathfrak{b})_{\mathscr{P}_B} \longrightarrow (\mathfrak{g}/\mathfrak{b})_{\mathscr{P}_B}|_{C'} \longrightarrow 0. \] By the inductive hypothesis it suffices to prove that $R^1\Gamma(C_0,(\mathfrak{g}/\mathfrak{b})_{\mathscr{P}_B}|_{C_0}(-c)) = 0$. This follows from the observation that $(\mathfrak{g}/\mathfrak{b})_{\mathscr{P}_B}|_{C_0}$ is generated by global sections.

\end{proof}

We view the intersection cohomology sheaf on $\overline{\Bun}_B$ as a holonomic $\mathscr{D}$-module when $k$ has characteristic zero, or an $\ell$-adic sheaf for a prime $\ell$ not equal to the characteristic of $k$.

\begin{corollary}

For $\lambda,\mu \in \Lambda$ as in Proposition \ref{relsm}, $\IC_{\overline{\Bun}_{B,\leq \mu}^{\lambda}}$ is universally locally acyclic over $\Bun_G$.

\label{ula}

\end{corollary}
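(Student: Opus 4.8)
The plan is to deduce the universal local acyclicity of $\IC_{\overline{\Bun}_{B,\leq \mu}^{\lambda}}$ from the relative smoothness of the Kontsevich resolution established in Proposition \ref{relsm}, by pushing the constant sheaf forward along the proper resolution morphism and then isolating the correct summand via the decomposition theorem. Write $r : \overline{\Bun}\!\,^K_B \to \overline{\Bun}_B$ for the morphism constructed above.

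First I would verify that $r$ restricts to a proper birational morphism over the truncated loci. Since Proposition \ref{kontdef} shows that $r$ preserves defect, the preimage of the open substack $\overline{\Bun}_{B,\leq \mu}^{\lambda}$ is precisely $\overline{\Bun}\!\,^{K,\lambda}_{B,\leq \mu}$. Hence the restriction
\[ r : \overline{\Bun}\!\,^{K,\lambda}_{B,\leq \mu} \longrightarrow \overline{\Bun}_{B,\leq \mu}^{\lambda} \]
is a base change of the proper morphism $\overline{\Bun}\!\,^{K,\lambda}_B \to \overline{\Bun}_B^{\lambda}$ along an open immersion, so it is again proper, and by Lemma \ref{dense} it restricts to an isomorphism over the dense open locus $\Bun_B^{\lambda}$.

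Next I would use smoothness. By Proposition \ref{relsm} the composite $\overline{\Bun}\!\,^{K,\lambda}_{B,\leq \mu} \to \Bun_G$ is smooth, and the source is a smooth stack by Proposition \ref{kontsm}; thus $\IC_{\overline{\Bun}\!\,^{K,\lambda}_{B,\leq \mu}}$ is, up to shift, the constant sheaf, and the constant sheaf on a stack smooth over $\Bun_G$ is universally locally acyclic over $\Bun_G$. Invoking stability of universal local acyclicity under proper pushforward, I would conclude that $r_* \IC_{\overline{\Bun}\!\,^{K,\lambda}_{B,\leq \mu}}$ is universally locally acyclic over $\Bun_G$. Finally, because $r$ is proper and birational with smooth source, the decomposition theorem writes this pushforward as a direct sum of shifted intersection cohomology sheaves of closed substacks, in which the unique summand supported on all of $\overline{\Bun}_{B,\leq \mu}^{\lambda}$ with trivial coefficients is exactly $\IC_{\overline{\Bun}_{B,\leq \mu}^{\lambda}}$; since universal local acyclicity passes to direct summands, the claim follows.

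The main obstacle I anticipate is not any new geometry but the careful invocation of the three formal inputs --- universal local acyclicity of the constant sheaf under a smooth morphism, its stability under proper pushforward, and its stability under passage to direct summands, together with the decomposition theorem --- in the setting of Artin stacks and a morphism $r$ that is proper but not schematic in general. These facts are standard for schemes, so I would need to cite or adapt the references establishing them for the relevant stacks and sheaf theories ($\mathscr{D}$-modules in characteristic zero, $\ell$-adic sheaves otherwise).
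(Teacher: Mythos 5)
Your argument is correct and follows essentially the same route as the paper: push the IC (constant, up to shift) sheaf of the smooth Kontsevich compactification forward along the proper birational resolution, apply the decomposition theorem (the paper cites Sun's version for Artin stacks) to extract $\IC_{\overline{\Bun}_{B,\leq \mu}^{\lambda}}$ as a summand, and use that universal local acyclicity is preserved under smooth pullback, proper pushforward, and direct summands. The only difference is that you spell out the compatibility of the truncations with the resolution via Proposition \ref{kontdef}, which the paper leaves implicit.
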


\begin{proof}

The pushforward of $\IC_{\overline{\Bun}\!\,^K_B}$ along $\overline{\Bun}\!\,^K_B \to \overline{\Bun}_B$ is semisimple by Theorem 8.3.2.4 of \cite{S}, and since this morphism is birational it follows that the pushforward contains $\IC_{\overline{\Bun}_B}$ as a summand. The proposition implies that $\IC_{\overline{\Bun}\!\,^{K,\lambda}_{B,\leq \mu}}$ is ULA over $\Bun_G$ for $\lambda,\mu$ as above, and now the claim follows because the ULA condition is preserved under proper pushforward and taking summands.

\end{proof}

\subsection{} In the remainder of this section we show that Corollary \ref{ula} still holds after we introduce a twist. A twisting parameter is a multiplicative local system on $\mathbb{G}_m$, so if $k$ has characteristic zero then twistings are in bijection with $k/\Z$.

Let $\mathscr{L}_{\Bun_G}$ be the determinant line bundle on $\Bun_G$, and let $\mathscr{L}_{\Bun_T}$ be the line bundle on $\Bun_T$ whose fiber at $\mathscr{P}_T$ is \[ \det R\Gamma(X,\mathfrak{h} \otimes \mathscr{O}_X) \otimes \bigotimes_{\check{\alpha}} \det R\Gamma(X,\check{\alpha}(\mathscr{P}_T)), \] where the tensor product runs over the roots of $G$. The twisting line bundle on $\overline{\Bun}_B$, which appears naturally in the theory of Eisenstein series, is \[ \mathscr{L}_{\overline{\Bun}_B} := \overline{\mathfrak{p}}^*(\mathscr{L}_{\Bun_G})^{-1} \otimes \overline{\mathfrak{q}}^*(\mathscr{L}_{\Bun_T}), \] where $\overline{\mathfrak{p}} : \overline{\Bun}_B \to \Bun_G$ and $\overline{\mathfrak{q}} : \overline{\Bun}_B \to \Bun_T$ are the natural maps.

Observe that $\mathscr{L}_{\overline{\Bun}_B}$ trivializes canonically over $\Bun_B$. This is because for a fixed $\mathscr{P}_B$ with induced $G$-bundle $\mathscr{P}_G$ and $T$-bundle $\mathscr{P}_T$, the adjoint bundle $\mathfrak{g}_{\mathscr{P}_G}$ admits a filtration whose subquotients are the line bundles $\check{\alpha}(\mathscr{P}_T)$ and the trivial vector bundle $\mathfrak{h} \otimes \mathscr{O}_X$.

For any twisting $c$, the corresponding category of twisted sheaves $D^c(\overline{\Bun}_B)$ is defined as the category of sheaves on the total space of $\mathscr{L}_{\overline{\Bun}_B}$ which are $\mathbb{G}_m$-equivariant against the character $c$ (here ``sheaves" can be interpreted either as $\ell$-adic sheaves or $\mathscr{D}$-modules, depending on the characteristic of $k$). Since $\mathscr{L}_{\overline{\Bun}_B}$ trivializes canonically over $\Bun_B$, we obtain an equivalence \[ D^c(\Bun_B) \tilde{\longrightarrow} D(\Bun_B). \] Under this equivalence we can view $\IC_{\Bun_B}$ as an object of $D^c(\Bun_B)$, and we denote by $\IC^c_{\overline{\Bun}_B}$ its intermediate extension in $D^c(\overline{\Bun}_B)$.

\begin{theorem}

For $\lambda,\mu \in \Lambda$ as in Proposition \ref{relsm}, $\IC^c_{\overline{\Bun}_{B,\leq \mu}^{\lambda}}$ is universally locally acyclic over $\Bun_G$.

\label{ulatwist}

\end{theorem}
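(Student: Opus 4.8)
The plan is to reduce the twisted statement to the untwisted Corollary~\ref{ula} by pulling everything back along the Kontsevich resolution, where the crucial new input will be that the twisting line bundle $\mathscr{L}_{\overline{\Bun}_B}$, after pulling back to $\overline{\Bun}\!\,^K_B$, extends the canonical trivialization over $\Bun_B$ in a controlled way along the normal crossings boundary. First I would pull back the twisting along $\overline{\Bun}\!\,^K_B \to \overline{\Bun}_B$ and observe that, over the open stratum $\Bun_B$, the pulled-back twisting is canonically trivial (as recorded just before the theorem). The substance of the matter is to understand how this trivialization degenerates as one approaches the boundary divisor. Using Proposition~\ref{ncd} (the normal crossings property) together with Proposition~\ref{kontdef}, which identifies the defect and the $T$-bundle $\mathscr{P}_T$ in terms of the section $s : X \to C$, I would compute the pullback $\pi^*\mathscr{L}_{\overline{\Bun}_B}$ explicitly on $\overline{\Bun}\!\,^K_B$ in terms of the determinant-of-cohomology line bundles attached to the family $p : C \to X \times S$.

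The key computation is to show that the pulled-back twisting on $\overline{\Bun}\!\,^K_B$ is, up to a canonically trivializable factor, of the form $\mathscr{O}(\sum a_i D_i)$ for the irreducible boundary divisors $D_i$ with explicit multiplicities $a_i$ determined by the difference between $\det R\Gamma(C,\check{\alpha}(\mathscr{P}_B))$ and $\det R\Gamma(X,\check{\alpha}(\mathscr{P}_{T}))$ restricted along $s$. The exact sequences appearing in the proof of Proposition~\ref{kontdef} furnish precisely this discrepancy: each rational component $C_i$ on which $\check{\lambda}(\mathscr{P}_B)$ has negative degree contributes a correction supported at the attaching point $x_i$, and summing over roots gives the multiplicity of the twisting along the corresponding boundary stratum. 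Having expressed the twisting as $\pi^*\mathscr{L}_{\overline{\Bun}_B} \cong \mathscr{N} \otimes \mathscr{O}(\sum_i a_i D_i)$ with $\mathscr{N}$ canonically trivial, I would invoke the normal crossings property to conclude that twisted sheaves on $\overline{\Bun}\!\,^K_B$ equivariant against $c$ are, Zariski-locally, identified with ordinary sheaves after a choice of local trivialization, and that the $\IC$ sheaf $\IC^c_{\overline{\Bun}\!\,^K_B}$ coincides with the untwisted $\IC_{\overline{\Bun}\!\,^K_B}$ up to the locally constant twisting datum, which does not affect the ULA property.

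Concretely, I would then argue as in Corollary~\ref{ula}: since the twisting restricts to something smooth relative to $\Bun_G$ once we are on the smooth stack $\overline{\Bun}\!\,^K_B$ (here using Proposition~\ref{relsm}, which gives smoothness of $\overline{\Bun}\!\,^{K,\lambda}_{B,\leq\mu} \to \Bun_G$ in the given range), the twisted $\IC$ sheaf $\IC^c_{\overline{\Bun}\!\,^{K,\lambda}_{B,\leq\mu}}$ is ULA over $\Bun_G$ because a twisted $\IC$ sheaf on a smooth stack, relatively smooth over the base, is ULA exactly as in the untwisted case. The pushforward along the proper birational map $\overline{\Bun}\!\,^K_B \to \overline{\Bun}_B$ then contains $\IC^c_{\overline{\Bun}_B}$ as a summand, by the twisted decomposition theorem, and ULA is preserved under proper pushforward and passage to summands.

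The main obstacle I expect is the bookkeeping in the middle step: verifying that the pullback of $\mathscr{L}_{\overline{\Bun}_B}$ to the resolution has a \emph{normal crossings} divisor of zeros and poles whose multiplicities are compatible with the canonical trivialization over $\Bun_B$, so that the twisted and untwisted $\IC$ sheaves genuinely differ only by a divisor-supported, relatively locally constant factor. This requires care because the naive trivialization over $\Bun_B$ need not extend, and the failure to extend is precisely what the boundary multiplicities $a_i$ measure; one must check that these multiplicities are constant along each boundary stratum (so that the equivariant structure is locally trivializable in a way compatible with the stratification) rather than merely that the divisor is supported on the boundary. This is exactly the point flagged in the introduction as requiring ``some additional care in the twisted case, with the normal crossings property of the boundary playing a crucial role.''
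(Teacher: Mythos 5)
Your overall skeleton agrees with the paper's: pull the twisting back to $\overline{\Bun}\!\,^K_B$, observe that triviality over $\Bun_B$ forces it to be of the form $\mathscr{O}(\sum_i r_i D_i)$ supported on the normal crossings boundary, prove ULA upstairs, and then push forward along the proper birational map using the decomposition theorem and the stability of ULA under proper pushforward and summands. (Your plan to compute the multiplicities $r_i$ via Proposition \ref{kontdef} is harmless but unnecessary: on a smooth stack, a line bundle trivialized off a divisor is automatically of this form, and the paper never needs the actual values.)

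The genuine gap is in your central step: the assertion that ``a twisted $\IC$ sheaf on a smooth stack, relatively smooth over the base, is ULA exactly as in the untwisted case,'' and the related claim that $\IC^c_{\overline{\Bun}\!\,^K_B}$ agrees with $\IC_{\overline{\Bun}\!\,^K_B}$ up to a locally constant datum. This is false for nontrivial $c$. After a local trivialization of $\mathscr{O}(\sum_i r_i D_i)$, the twisted $\IC$ sheaf is the intermediate extension of a rank-one Kummer-type local system with nontrivial monodromy around each $D_i$ with $c^{r_i} \neq 1$; its singular support is not the zero section but contains the conormal directions to the boundary strata. Smoothness of the total map $\overline{\Bun}\!\,^{K,\lambda}_{B,\leq\mu} \to \Bun_G$ therefore does not suffice: already for $\mathbb{A}^2 \to \mathbb{A}^1$, $(x,t) \mapsto x$, with twisting $\mathscr{O}(\{x=0\})$ and nontrivial $c$, the projection is smooth but the twisted $\IC$ sheaf is not ULA. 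What is actually needed --- and what the paper isolates as Lemma \ref{keylem} --- is that \emph{each stratum} $\mathscr{Y}_n$ of the normal crossings stratification be smooth over $\Bun_G$; one then checks transversality of the singular support (a union of conormal lines whose span at a point of $\mathscr{Y}_n$ is exactly the conormal space to $\mathscr{Y}_n$) with the fibers of the projection. The stratum-wise smoothness is where the full strength of Proposition \ref{relsm} enters: it gives smoothness over $\mathscr{M}_X \times \Bun_G$, and since the boundary strata are preimages of the (smooth) strata of $\partial\mathscr{M}_X$, each $\mathscr{Y}_n$ is smooth over $\Bun_G$. Your proposal uses only the weaker consequence that the total space is smooth over $\Bun_G$, which is exactly the point at which the twisted case diverges from Corollary \ref{ula}.
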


The proof will occupy the rest of this section.

\subsection{} We will deduce Theorem \ref{ulatwist} from the following key lemma. Let $f : \mathscr{Y} \to \mathscr{Z}$ be a morphism of smooth Artin stacks, and let $D \subset \mathscr{Y}$ be a normal crossings divisor. Suppose we are given a line bundle of the form
\begin{equation}
\mathscr{O}_{\mathscr{Y}}(\textstyle \sum_i r_iD_i),
\label{twistmult}
\end{equation}
where the $D_i$ are the irreducible components of $D$ and $r_i \in \Z$. There is a corresponding category of twisted sheaves $D^c(\mathscr{Y})$.  Since the line bundle trivializes canonically over $\mathscr{Y} \setminus D$, we can consider the twisted IC sheaf $\IC^c_{\mathscr{Y}}$.

The normal crossings divisor $D$ gives rise to a stratification on $\mathscr{Y}$. Namely, for $n \geq 0$ let $\mathscr{Y}_n \subset \mathscr{Y}$ be the codimension $n$ stratum, which is the locus where $D$ is locally isomorphic to an intersection of $n$ coordinate hyperplanes in affine space.

\begin{lemma}

If $\mathscr{Y}_n \to \mathscr{Z}$ is smooth for all $n \geq 0$, then $\IC^c_{\mathscr{Y}}$ is universally locally acyclic over $\mathscr{Z}$.

\label{keylem}

\end{lemma}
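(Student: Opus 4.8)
The plan is to prove Lemma \ref{keylem} by reducing the universal local acyclicity (ULA) of $\IC^c_{\mathscr{Y}}$ over $\mathscr{Z}$ to a stratum-by-stratum computation, exploiting the fact that the normal crossings structure of $D$ makes the twisted IC sheaf completely explicit. First I would recall the general principle that a complex is ULA over $\mathscr{Z}$ if and only if its $*$-restrictions (equivalently, by the self-duality of IC, its $!$-restrictions) to the fibers of $f$ behave well in families; concretely, since $\mathscr{Z}$ is smooth, it suffices to check that the restriction of $\IC^c_{\mathscr{Y}}$ to each stratum $\mathscr{Y}_n$ is ULA over $\mathscr{Z}$, because the ULA property can be detected on a stratification by locally closed substacks and is stable under the gluing (recollement) triangles. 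This is where the hypothesis that each $\mathscr{Y}_n \to \mathscr{Z}$ is smooth will be used.

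The key point is to identify $\IC^c_{\mathscr{Y}}|_{\mathscr{Y}_n}$ explicitly. Because $D$ is a normal crossings divisor and the twisting line bundle has the multiplicative form \eqref{twistmult}, the monodromy of the twisted local system around each component $D_i$ is given by the character $c$ raised to the power $r_i$. Along the codimension $n$ stratum $\mathscr{Y}_n$, where $n$ branches of $D$ meet transversally, the local structure of $\mathscr{Y}$ is that of $\mathbb{A}^n \times \mathscr{Y}_n$ with $D$ given by the union of the $n$ coordinate hyperplanes, and the twisted IC sheaf is computed by the (twisted) nearby/vanishing cycles formalism for Kummer local systems. In the simplest scenario all the monodromies are nontrivial, and then the intermediate extension of the twisted constant sheaf from the open stratum has vanishing $*$-restriction to every positive-codimension stratum, so that $\IC^c_{\mathscr{Y}}$ restricts to $\mathscr{Y}_n$ as a shifted twisted local system supported in a single perverse degree; when some monodromies are trivial the restriction is a shift of the IC sheaf of the corresponding smaller-codimension stratum closure, but in every case the restriction to $\mathscr{Y}_n$ is (up to shift and twist) a local system tensored with the constant sheaf, hence lisse along $\mathscr{Y}_n$.

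Granting this local description, I would conclude as follows. A lisse complex on $\mathscr{Y}_n$ is ULA over $\mathscr{Z}$ precisely when the map $\mathscr{Y}_n \to \mathscr{Z}$ is smooth, since smoothness guarantees that the fibers of $f|_{\mathscr{Y}_n}$ vary without cohomological jumps and the vanishing-cycle obstruction to local acyclicity is trivial for a smooth morphism applied to a lisse sheaf. Assembling the strata via the recollement triangles relating $j_!$, $j_*$, and the restrictions $i_n^*$, and invoking stability of ULA under the two-out-of-three property for these triangles together with proper (here, closed) pushforward along the inclusions of strata closures, yields that $\IC^c_{\mathscr{Y}}$ is ULA over $\mathscr{Z}$.

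The main obstacle I anticipate is the explicit computation of the twisted intermediate extension across a deep stratum $\mathscr{Y}_n$ with $n \geq 2$, and in particular the bookkeeping of which monodromies are trivial versus nontrivial. The cleanest route is to work étale-locally (or formally-locally) where the picture is literally $\mathbb{A}^n$ with coordinate hyperplanes and a product Kummer sheaf, reduce to the one-variable case by a Künneth-type factorization of the IC sheaf for transverse divisors, and handle the mixed case (some trivial, some nontrivial monodromies) by splitting off the trivially-twisted directions, over which the twisted IC sheaf is simply the untwisted $\IC$ of the ambient smooth stack and hence already ULA over $\mathscr{Z}$ by the smoothness hypothesis combined with Corollary \ref{ula}-type reasoning. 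Verifying that this local analysis glues to a global statement on the Artin stack $\mathscr{Y}$, and that the ULA property (which is local on $\mathscr{Y}$) is insensitive to the choice of local trivialization of the twisting, is the technical heart of the argument.
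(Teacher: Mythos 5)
The gap is in your gluing step. You assert that universal local acyclicity ``can be detected on a stratification by locally closed substacks and is stable under the gluing (recollement) triangles,'' so that it suffices to check that each restriction $\IC^c_{\mathscr{Y}}|_{\mathscr{Y}_n}$ is ULA over $\mathscr{Z}$. This principle is false: while ULA objects do form a triangulated subcategory and $i_*$ along a closed immersion preserves ULA, the functor $j_!$ (or $j_*$) along an open immersion does not. Concretely, take $Y = \mathbb{A}^2 \to Z = \mathbb{A}^1$, $(x,t) \mapsto t$, with $U = Y \setminus \{0\}$ and $F = j_!\overline{\mathbb{Q}}_\ell$. Then $j^*F$ is constant on $U$ (which is smooth over $Z$), hence ULA, and $i^*F = 0$ is ULA, but $F$ itself is not ULA over $Z$: in the triangle $F \to \overline{\mathbb{Q}}_{\ell,Y} \to \delta_0$ the middle term is ULA and $\delta_0$ is not. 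So knowing that each $\IC^c_{\mathscr{Y}}|_{\mathscr{Y}_n}$ is lisse and that each $\mathscr{Y}_n \to \mathscr{Z}$ is smooth does not, by recollement alone, yield the conclusion; what is missing is control over how the conormal directions to the strata closures interact with $f$, i.e.\ exactly the content of the lemma. (Your closing remarks about a local product structure $\mathbb{A}^n \times \mathscr{Y}_n$ compatible with $f$ point toward a possible repair, but you would need to construct that trivialization using the smoothness of $\mathscr{Y}_n \to \mathscr{Z}$, which you do not do.)

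The paper avoids this entirely by working with singular support. After reducing to schemes by smooth-locality of ULA, it invokes Beilinson's criterion: one must show $(df_y)^{-1}(C_y) \setminus \{0\} = \varnothing$ for $C = SS(\IC^c_Y)$. For trivial $c$ the singular support is the zero section and smoothness of $f$ suffices; for nontrivial $c$ (and all $r_i \neq 0$) the fiber $C_y$ at $y \in Y_n$ is the union of the $n$ conormal lines to the branches of $D$, whose span is exactly $\ker(T^*_y(Y) \to T^*_y(Y_n))$, so the claim follows from injectivity of $T^*_{f(y)}(Z) \to T^*_y(Y_n)$, i.e.\ from smoothness of $Y_n \to Z$. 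Your stratumwise description of the restrictions of $\IC^c_{\mathscr{Y}}$ is consistent with this singular-support computation, but the microlocal transversality statement is the actual engine of the proof, and it is the step your argument is missing.
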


\begin{proof}

Since the ULA property is local in the smooth topology on the source and target, the assertion reduces to the case that $Y = \mathscr{Y}$ and $Z = \mathscr{Z}$ are schemes. Let $C \subset T^*(Y)$ be the singular support of $\IC^c_Y$. By the characterization of singular support introduced in \cite{B}, it suffices to prove that for any point $y \in Y(k)$, we have $(df_y)^{-1}(C_y) \setminus \{ 0 \} = \varnothing$.

We may assume that the multiplicities $r_i$ in (\ref{twistmult}) are all nonzero. If $c$ is trivial, then $C$ is the zero section. Since the hypothesis implies that $f$ is smooth, the claim follows in this case. If $c$ is nontrivial, then for $y \in Y_n$ the singular support $C_y \subset T_y^*(Y)$ is the union of the $n$ lines orthogonal to the $n$ hyperplanes in $T_y(Y)$ determined by the normal crossings property of $D$. The span of $C_y$ is precisely the kernel of $T^*_y(Y) \to T^*_y(Y_n)$, so the injectivity of $T^*_{f(y)}(Z) \to T^*_y(Y_n)$ implies the claim in this case also.

\end{proof}

\subsection{} In order to apply Lemma \ref{keylem}, we must show that the boundary of the Kontsevich compactification has normal crossings. According to Proposition \ref{kontsm}, we need only prove the corresponding claim for $\mathscr{M}_X$.

If $f : X \to Y$ is an \'{e}tale map of smooth curves, there is a corresponding morphism $\mathscr{M}_Y \to \mathscr{M}_X$ which sends $C \to Y \times S$ to $X \times_Y C \to X \times S$. It is \'{e}tale at a $k$-point $C \to Y$ if and only if for all nodes $y_1,\cdots,y_n$ of $C$ contained in $Y$, the fiber $f^{-1}(y_i)$ is a single point.

\begin{proposition}

The complement $\partial \mathscr{M}_X$ of the open point in $\mathscr{M}_X$ given by $X$ is a normal crossings divisor.

\label{ncd}

\end{proposition}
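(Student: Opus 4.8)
The strategy is to show that, in a suitable smooth chart around any boundary point, the components of $\partial\mathscr{M}_X$ are cut out by the node-smoothing parameters, and that these parameters form part of a coordinate system. Since $\mathscr{M}_X$ is smooth by Proposition \ref{kontsm} and the normal crossings property may be checked \'{e}tale-locally, I would fix a geometric point $[C]\in\partial\mathscr{M}_X$, where $p\colon C\to X$ has nodes $c_1,\dots,c_n$ attached at points $x_1,\dots,x_m\in X$. Using the \'{e}tale morphisms $\mathscr{M}_Y\to\mathscr{M}_X$ recalled above—which are \'{e}tale at $[C]$ precisely when the attaching points are not split—the \'{e}tale-local structure of $\mathscr{M}_X$ at $[C]$ depends only on \'{e}tale neighborhoods of the $x_j$ in $X$. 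As these points are distinct, the deformation problem factors as a product over them, so it suffices to treat a single attaching point, with $C=X\cup_{x_0}R$ for a tree $R$ of projective lines.

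The core of the argument is to produce independent smoothing coordinates from deformation theory. Recall from the proof of Proposition \ref{kontsm} that $H^1(T(C))$ is the direct sum of the skyscrapers $\delta_{c_i}$ at the nodes. Since $T(C/X)$ sits in the triangle $T(C/X)\to T(C)\to p^*T(X)$ with $p^*T(X)$ a line bundle, the long exact sequence of cohomology sheaves exhibits $\bigoplus_i\delta_{c_i}$ as a quotient of $H^1(T(C/X))$, the kernel being $\coker(dp)$, a sheaf supported on the rational tail $R$ and hence with vanishing $H^1$. Because $C$ is a curve, the edge homomorphism of the local-to-global spectral sequence is surjective (there being no $R^2\Gamma$); composing, I obtain a surjection from the tangent space
\[ R^1\Gamma(C,T(C/X))=T_{[C]}\mathscr{M}_X\longrightarrow R^0\Gamma\bigl(C,H^1(T(C/X))\bigr)\longrightarrow\bigoplus_{i=1}^n k_{c_i}. \]
This provides functions $t_1,\dots,t_n$ on a smooth chart near $[C]$, with $t_i$ the smoothing parameter of $c_i$, whose differentials are linearly independent.

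Finally I would conclude the normal crossings description. The versal smoothing of a node being $xy=t_i$, the node $c_i$ persists exactly along $\{t_i=0\}$; moreover smoothing all $n$ nodes yields a connected smooth genus $g$ curve mapping to $X$ with degree one, necessarily $X$ itself, so a nearby curve is singular if and only if some $t_i$ vanishes. Thus $\partial\mathscr{M}_X=\bigcup_{i=1}^n\{t_i=0\}$ \'{e}tale-locally. Since $\mathscr{M}_X$ is regular and the $dt_i$ are independent, the $t_i$ extend to a regular system of parameters, so these divisors are smooth and meet transversally, giving the normal crossings property. The main obstacle is the passage from the middle step to this conclusion: the surjection onto $\bigoplus_i k_{c_i}$ records the smoothings only to first order, and one must combine it with the explicit local model $xy=t_i$, the vanishing of $H^1$ on the genus-zero tails, and the semicontinuity of the number of nodes (to exclude new nodes appearing nearby) in order to upgrade first-order independence to an honest transverse coordinate system.
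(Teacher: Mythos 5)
Your argument is correct, but it proceeds quite differently from the paper's. The paper also begins by \'{e}tale-localizing on $X$ (reducing to $X = \A^1$ via an \'{e}tale function separating the attaching points), but then it builds an \emph{explicit} smooth chart $S_m \to \mathscr{M}_{\A^1}$ around $[C]$ by iterated blow-ups of $\A^1 \times (\text{parameter space})$ along loci $\{z = z_j,\ t_j = 0\}$, one level of the dual tree at a time, and reads off that the boundary is the preimage of the union of the coordinate hyperplanes $\{t_j = 0\}$ under a smooth projection. You instead argue intrinsically: the local-to-global spectral sequence for $T(C/X)$, together with the identification of $\ker\bigl(H^1(T(C/X)) \to \bigoplus_i \delta_{c_i}\bigr)$ with $\coker(dp)$ (which is $\mathscr{O}$ of the genus-zero tails, so has no $H^1$), gives a surjection $T_{[C]}\mathscr{M}_X \to \bigoplus_i k_{c_i}$; combined with the \'{e}tale-local model $xy = t_i$ for the versal deformation of each node and the finiteness of the relative singular locus over the base (so no new nodes appear), this exhibits $\partial\mathscr{M}_X$ locally as $V(t_1\cdots t_n)$ with the $dt_i$ independent. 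Your final assembly is stated somewhat informally, but you correctly name every ingredient needed to complete it, and the tangent-space computation --- which is the real content --- is right; note also that your reduction to a single attaching point is unnecessary, since the spectral-sequence argument treats all nodes of $C$ at once. The trade-off: the paper's construction is self-contained and produces concrete coordinates (at the cost of heavy notation and an implicit appeal to essentially the same tangent-space count to verify the chart is smooth), while your route is shorter and works for arbitrary $X$ directly, at the cost of invoking the standard local structure theory of nodal degenerations.
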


\begin{proof}

Fix a $k$-point $p : C \to X$ of $\partial \mathscr{M}_X$. The claim reduces to the case that $X = \A^1$ as follows. Choose an open $U \subset X$ which contains all the nodes $x_1,\cdots,x_n$ of $C$ contained in $X$, and such that there exists an \'{e}tale function $f : U \to \A^1$ with the property that $f^{-1}(f(x_i)) = \{ x_i \}$ for all $1 \leq i \leq n$. Then we obtain maps $\mathscr{M}_X \to \mathscr{M}_U \leftarrow \mathscr{M}_{\A^1}$, and the image of $p$ in $\mathscr{M}_U$ lifts to $\tilde{p}$ in $\mathscr{M}_{\A^1}$. Moreover, $\mathscr{M}_X \to \mathscr{M}_U$ is \'{e}tale and $\mathscr{M}_{\A^1} \to \mathscr{M}_U$ is \'{e}tale at $\tilde{p}$.

For $X = \A^1$ we will explicitly construct a smooth chart around $p$. There is a canonical stratification \[ X = C_0 \subset C_1 \subset \cdots \subset C_m = C \] defined as follows: the subgraph of the dual graph corresponding to $C_i$ contains the vertices which are connected to the vertex $X$ by a path consisting of at most $i$ edges. We will use the notation $\A^r_{\disj}$ to denote the open locus in $\A^r$ where no two coordinates are equal.

Define $S_1 := \A^n_{\disj} \times \A^n$. For any $1 \leq j \leq n$ let \[ Z_{1j} := \{ (z,(z_i),(t_i)) \in \A^1 \times S_1 \ | \ z = z_j, \ t_j = 0 \}. \] Note that the $Z_{1j}$ are pairwise disjoint and let $Y_1$ be the blow-up of $\A^1 \times S_1$ along their union. The family $Y_1 \to S_1$ defines a smooth chart $S_1 \to \mathscr{M}_{\A^1}$ around $C_1$, since $H^0T_{C_1}(\mathscr{M}_{\A^1})$ is spanned by the $n$ deformations which move the nodes in $X$, along with the $n$ deformations which remove the components of $C_1 \setminus C_0$.

Since $Z_{1j}$ has a Zariski neighborhood isomorphic to a linear embedding $\A^{r-2} \cap V \to V$ for an open set $V \subset \A^r$, the normal bundle $N(Z_{1j}/\A^1 \times S_1)$ is trivial. In particular we can choose a coordinate $\pi_{1j} : E_{1j} \to \mathbb{P}^1$ on the exceptional divisor $E_{1j} = \mathbb{P}(N(Z_{1j}/\A^1 \times S_1))$ with the property that $\pi_{1j}^{-1}(\infty)$ intersected with each fiber over $S_1$ is contained in the singular locus. Let $n_2$ be the number of connected components of $C_2 \setminus C_1$ and put $S_2 = S_1 \times \A^{n_2}_{\disj} \times \A^{n_2}$. Let \[ \sigma_2 : \{ 1,\cdots,n_2 \} \to \{ 1,\cdots,n \} \] be a function such that $\sigma_2^{-1}(i)$ is the number of components of $C_2 \setminus C_1$ whose closure intersects the $i^{\text{th}}$ component of $C_1 \setminus C_0$. For each $1 \leq j \leq n_2$ define \[ Z_{2j} := \{ (y,(z_i),(t_i)) \in Y_1 \times \A^{n_2}_{\disj} \times \A^{n_2} \ | \ y \in E_{1\sigma_2(j)}, \pi_{1\sigma_2(j)}(y) = z_j, t_j = 0 \}, \] and let $Y_2$ be the blow-up of $Y_1 \times \A^{n_2}_{\disj} \times \A^{n_2}$ along the union of the $Z_{2j}$. Then $Y_2 \to \A^1 \times S_2$ defines a smooth chart $S_2 \to \mathscr{M}_{\A^1}$ around $C_2$.

Continue in this way to obtain a smooth chart $S_m \to \mathscr{M}_{\A^1}$ around $C$. At the inductive step, the center $Z_{ij}$ of each blow-up has a neighborhood of the form $\A^{r-2} \cap V \to V$ for an open $V \subset \A^r$, so that a suitable coordinate on the resulting exceptional divisor can be chosen. The preimage of $\partial \mathscr{M}_{\A^1}$ in $S_m$ equals the preimage of the union of the coordinate hyperplanes under the smooth projection $S_m \to \A^n$.

\end{proof}

The theorem now follows readily.

\begin{proof}[Proof of Theorem \ref{ulatwist}]

We apply Lemma \ref{keylem} with $\mathscr{Y} = \overline{\Bun}\!\,^{K,\lambda}_{B,\leq \mu}$ and $\mathscr{Z} = \Bun_G$. The boundary in $\overline{\Bun}\!\,^K_B$ is the preimage of the boundary in $\mathscr{M}_X$ under the canonical projection, so the boundary in the former is a normal crossings divisor by Propositions \ref{kontsm} and \ref{ncd}. Let $\mathscr{L}_{\overline{\Bun}\!\,^K_B}$ be the inverse image of $\mathscr{L}_{\overline{\Bun}_B}$. Since $\mathscr{L}_{\overline{\Bun}\!\,^K_B}$ trivializes canonically over $\Bun_B$, it has the form (\ref{twistmult}). Moreover, Proposition \ref{relsm} implies that the the strata in the boundary of $\overline{\Bun}\!\,^{K,\lambda}_{B,\leq \mu}$ determined by the normal crossings structure are smooth over $\Bun_G$, i.e. the hypotheses of Lemma \ref{keylem} are satisfied and hence $\IC^c_{\overline{\Bun}\!\,^{K,\lambda}_{B,\leq \mu}}$ is ULA over $\Bun_G$. Now apply the argument from the proof of Corollary \ref{ula} to deduce the same for $\IC^c_{\overline{\Bun}_{B,\leq \mu}^{\lambda}}$.

\end{proof}

\end{document}